\newtheorem{thm}{Theorem}[section]
\newtheorem{prop}{Proposition}[section]
\newtheorem{Def}[thm]{Definition}
\newtheorem{lem}[thm]{Lemma}
\newtheorem{corl}{Corollary}[section]
\newtheorem{example}{Example}[section]
\newcommand{\eps}{\varepsilon}
\def \<{\langle}
\def \>{\rangle}
\def \H{{\cal H}}
\def \H^0{{\cal H}^0 or}
\def \p{\partial}
\def \n{\nabla}
\def \beq{\begin{equation}}
\def \eeq{\end{equation}}
\def \n{\nabla}
\def \eref{\eqref}
\begin{document}

\title{The $L^1$ Liouville Property on Weighted manifolds}

\author{Nelia Charalambous}
\address{Department of Mathematics and Statistics, University of Cyprus, Nicosia, 1678, Cyprus}
\email[Nelia Charalambous]{nelia@ucy.ac.cy}

\author{Zhiqin Lu}
\address{Department of Mathematics, University of California, Irvine, Irvine, CA 92697, USA}
\email[Zhiqin Lu]{zlu@uci.edu}

\thanks{
The second author is partially supported by the DMS-12-06748.}

\subjclass[2000]{Primary: 58J35; Secondary: 53C21}

\date{February 20, 2014}

\keywords{drifting Laplacian, Liouville property, mean value inequality, heat kernel}

\begin{abstract}
We consider a complete noncompact smooth metric measure space $(M^n,g,e^{-f} dv)$ and the associated drifting Laplacian. We find sufficient conditions on the geometry of the space so that every nonnegative $f$-subharmonic function with bounded weighted $L^1$ norm is constant.
\end{abstract}
\maketitle

\section{Introduction}

We consider a  complete noncompact smooth metric measure space $$(M^n,g,e^{-f} dv),$$ where $(M^n,g)$ is a complete noncompact smooth Riemannian manifold with a weighted volume measure $d\mu= e^{-f} dv$ such that $f$ is a smooth function on $M$ and $dv$ is the Riemannian measure. In this paper, we refer to such  a space as a {\it weighted manifold}.

The associated drifting Laplacian on such a weighted manifold is
\[
\Delta_f=\Delta -\nabla f \cdot \nabla,
\]
where $\Delta$ is the Laplace operator and $\nabla$ is the gradient  operator on  the Riemannian manifold $M$. $\Delta_f$ can be extended to a densely defined, self-adjoint, nonpositive definite operator on the space of square integrable functions with respect to the measure $d\mu$. A smooth function $u$ is called {\it  $f$-harmonic}, whenever
\[
\Delta_f u = 0
\]
and $f$-subharmonic if
\[
\Delta_f u \geq 0.
\]

We use $\langle\,,\,\rangle$ to denote the inner product  of  the Riemannian metric  and $|\cdot|$ to denote the corresponding norm. The weighted $L^p$ norms are defined as
\[
\|u\|_p =\left(\int_M |u|^p\, e^{-f} \, dv\right)^{1/p}
\]
for any $1\leq p <\infty$.  The weighted $L^p$ space is then given by
\[
L^p(M, e^{-f}dv) =\{ u \;  \big| \; \|u\|_p <\infty \} .
\]

We say that the weighted manifold satisfies the {\it $L^1$ Liouville property} if every nonnegative $f$-subharmonic function that belongs to $L^1$ is constant.  In this article, we are interested in finding  sharp conditions on the curvature of the weighted manifold and $f$ so that it satisfies the  $L^1$ Liouville property.

In the classical case $f=0$,  P. Li demonstrated that a manifold satisfies the $L^1$ Liouville property whenever its Ricci curvature is bounded below by a negative quadratic function: ${\rm Ric}(x) \geq -C (1+r(x)^2)$, where $r(x)=d(x,x_o)$ is the distance to a fixed point $x_o$  \cite{Li1}. This is achieved by proving an appropriate mean value inequality for subharmonic functions and finding adequate heat kernel bounds on geodesic balls at $x_o$.  The main ingredients in the proof of both the mean value inequality and the heat kernel bound were the Bochner formula, the Laplacian comparison theorem and volume comparison for general geodesic balls. The lower bound on the Ricci curvature is sharp as is shown in \cite{LiSch}.

In the case of weighted manifolds, it was shown by Bakry and \'Emery that the analogous Bochner formula can be obtained if one takes as the curvature tensor
\[
\textup{Ric}_f=\textup{Ric}+ \nabla^2 f
\]
where $\textup{Ric}$ is the Ricci curvature of the Riemannian manifold and $\nabla^2 f$ is the Hessian of the function $f$ \cite{BE}. $\textup{Ric}_f$ is known as the Bakry-\'Emery Ricci curvature.  However, the Bochner formula  for the drifting Laplacian remains slightly different. It is given by
\begin{equation} \label{fBochner}
\Delta_f|\nabla u|^2=2 |\n^2 u|^2+2\langle\nabla u, \nabla\Delta_f u\rangle + 2 \textup{Ric}_f(\nabla u, \nabla u),
\end{equation}
where $\nabla^2 u$ is the Hessian of $u$ and $|\n^2 u|^2=\sum u_{ij}^2$. We do not have a notion of $f$-Hessian and what appears in the above Bochner formula for the drifting Laplacian is the usual Hessian. Observe that when $f=0$, \eref{fBochner} becomes the  Bochner formula in the Riemannian case.

It was previously shown that the weighted manifold satisfies the $L^1$ Liouville property whenever ${\rm Ric}_f\geq 0$ and $f$ has quadratic growth \cite{WuWu}, and also whenever   ${\rm Ric}_f(x)\geq -K(1+r(x)^2) $ and $f$ is bounded \cite{Wu1}.  In this article, using an alternative method from the one in~\cites{WuWu, Wu1}, we will prove the following more general result
\begin{thm} \label{LiouT}
Let $(M^n,g,d\mu)$  be a  complete  metric measure space with $d\mu =e^{-f}dv$ and fix a point $x_o\in M$. Suppose that for all $R>1$ the manifold satisfies the weighted volume form condition \eref{VR} as in {\rm Definition \ref{VF}} with $A(R)= b \, R^2$ and some uniform constants $a, b$ that do not depend on $R$.

Then every nonnegative $f$-subharmonic function in $L^1$  must be constant.
\end{thm}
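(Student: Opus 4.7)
The plan is to adapt P. Li's heat-semigroup strategy from the classical case \cite{Li1} to the drifting Laplacian $\Delta_f$, replacing the pointwise Bakry--\'Emery lower bound by the weighted volume form condition \eqref{VR} as the sole geometric input. The argument proceeds through three ingredients: a weighted mean value inequality for $f$-subharmonic functions, upper bounds for the Dirichlet heat kernel of $\Delta_f$ on large geodesic balls, and a semigroup argument that closes the Liouville statement.

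First I would establish, under the hypothesis \eqref{VR}, a weighted mean value inequality of the form
\[
u(x_o)\leq \frac{C(a,b)}{\mu(B_R(x_o))}\int_{B_R(x_o)} u \, e^{-f} \, dv
\]
for every nonnegative $f$-subharmonic $u$ and every $R>1$. The volume comparison built into \eqref{VR} replaces the usual role of the Laplacian comparison theorem: combined with a Moser or De Giorgi iteration performed for $\Delta_f$, and with the Bochner identity \eqref{fBochner} used in the gradient estimate, it furnishes the required sup-to-average control with constants depending only on $a$ and $b$.

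Second, again from \eqref{VR}, I would derive a Gaussian-type upper bound for the heat kernel of $\Delta_f$ on balls $B_R(x_o)$. Because the assumption allows the weighted volume to grow like $e^{bR^2}$, this is precisely the borderline Grigor'yan/Karp--Li regime that still yields stochastic completeness of the weighted manifold: $P_t 1 = 1$, where $P_t = e^{t\Delta_f}$. The cutoff functions used here must be chosen so as to absorb both the gradient term and the drift term $\nabla f\cdot\nabla$ against the weighted measure $d\mu$.

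Finally, the Liouville argument itself. For $u\geq 0$, $f$-subharmonic, $u\in L^1(d\mu)$, subharmonicity gives $P_tu\geq u$ pointwise, while stochastic completeness combined with $u\in L^1$ yields
\[
\int_M (P_tu-u)\,d\mu=0,
\]
forcing $P_tu=u$ and hence $\Delta_f u=0$. Then $u$ is a nonnegative $L^1$ $f$-harmonic function, and the mean value inequality from the first step gives $u(x)\leq C\|u\|_1/\mu(B_R(x))$ at every point $x$. If $\mu(M)=\infty$, letting $R\to\infty$ yields $u\equiv 0$; if $\mu(M)<\infty$, constants are admissible and a standard averaging argument shows $u$ must be one of them. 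The main obstacle, I expect, is carrying the weighted cutoff estimates through both the mean value inequality and the heat kernel construction with constants that depend only on $a$ and $b$: the quadratic growth $A(R)=bR^2$ is sharp both in the volume test for stochastic completeness and in Li's original curvature hypothesis, so the estimates must be borderline-tight throughout.
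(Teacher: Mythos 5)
Your overall architecture (mean value inequality from \eqref{VR}, Gaussian heat kernel bounds, stochastic completeness, then a semigroup argument) matches the paper's, but two steps as written do not go through. First, your assertion that subharmonicity gives $P_tu\geq u$ pointwise is precisely the point where the real work lies, and you have not supplied it. On a noncompact manifold this monotonicity is equivalent to the integration by parts identity
\[
\int_M \Delta_{f,y}H_f(x,y,t)\,u(y)\,d\mu(y)=\int_M H_f(x,y,t)\,\Delta_{f,y}u(y)\,d\mu(y),
\]
and justifying it requires showing that the boundary terms $\int_{\partial B_{x_o}(R)}|\nabla H_f|\,u$ and $\int_{\partial B_{x_o}(R)}H_f\,|\nabla u|$ vanish as $R\to\infty$. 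This is the technical core of the paper (Proposition \ref{IBP}): one needs the mean value inequality to control $\sup u$ on annuli, a Caccioppoli-type bound for $\int|\nabla u|$ and $\int|\nabla H_f|^2$, and the Gaussian decay $e^{-d^2/5t}$ to beat the $e^{\tilde C R^2}$ growth coming from $A(R)=bR^2$ --- and even then only for $t$ small, with the general case recovered via the semigroup property. Without this, the claim $\partial_t P_t u\geq 0$ is unsupported. (Relatedly, your appeal to the Bochner identity for a gradient estimate is not available here: no lower bound on ${\rm Ric}_f$ is assumed, only \eqref{VR}; the Moser iteration from the Sobolev inequality must carry the whole load.)

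Second, your endgame fails. The mean value inequality obtainable under \eqref{VR} with $A(R)=bR^2$ has the form
\[
\sup_{B_{x_o}(R/2)}u\;\leq\;C\,\frac{e^{C'bR^2}}{R^2\,V_f(x_o,R)}\,\|u\|_1,
\]
so the constant is emphatically not independent of $R$: the factor $e^{C'bR^2}$ need not be dominated by $V_f(x_o,R)$ (the volume may well grow only polynomially), and letting $R\to\infty$ gives nothing. Indeed, even in Li's classical setting with ${\rm Ric}\geq -C(1+r^2)$ one cannot conclude $u\equiv 0$ this way; moreover the conclusion $u\equiv 0$ would be false in general since nonzero constants are admissible when $\mu(M)<\infty$. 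The correct closing move, which the paper takes from \cite{Li1}, is the truncation trick: having shown $u$ is $f$-harmonic, set $u_\alpha=\min\{u,\alpha\}$, verify that $u_\alpha$ is $f$-superharmonic, lies in $L^1$, satisfies the same mean value inequality and gradient estimates (so the integration by parts applies to it as well), and conclude via stochastic completeness that $u_\alpha$ is also $f$-harmonic; regularity then forces $u_\alpha\equiv u$ or $u_\alpha\equiv\alpha$ for every $\alpha>0$, whence $u$ is constant. You need to replace your final paragraph with this argument.
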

As we will see, our result indicates that the weighted manifold will satisfy the $L^1$ Liouville property under various conditions on  the Bakry-\'Emery Ricci tensor and $f$ that guarantee the following basic principle: geodesic balls of radius $r$ have weighted volume growth of the  order $e^{cR^2}$, and the mean value inequalities and heat kernel estimates also hold on geodesic balls with the additional factor of $e^{cR^2}$.   In Section 2  we obtain a mean value inequality for $f$-subharmonic functions, as well Gaussian estimates for the drifting heat kernel under a more general volume form condition. We provide the proof of Theorem \ref{LiouT} in Section 3. In Section 4 we give conditions on ${\rm Ric}_f$ and $f$ that guarantee the assumptions of the theorem and in the final section we show that the above conditions are sharp.

\section{Mean Value Inequality and Heat Kernel Estimate}

We fix a point $x_o \in M$ and set $r(x)=d(x,x_o)$ the radial distance to $x_o$. We denote the geodesic ball of radius $R$ at $x$ by $B_x(R)$, and its volume in  the weighted measure by $V_f(x,R)$.

Using the classical Li-Yau method, one can obtain gradient estimates and mean value inequalities for subharmonic functions whenever $\Delta  r(x,y) \leq  \frac{a}{r(x,y)} + b$   for some uniform constants $a, b$ (here $r(x,y)=d(x,y))$. Such uniform Laplacian comparison theorem is easy to obtain in the case  $f=0$,  by simply assuming that the Ricci curvature of the manifold is bounded below. In the case of weighted manifolds however, ${\rm Ric}_f (x)\geq- (n-1)K $ on a ball around $x_o$ does not imply a uniform Laplacian estimate $\Delta_f r(x,y) \leq C\frac{1}{r(x,y)} + b$  without strong restrictions on $f$.

In this article we will use an alternative technique  based on the method of Saloff-Coste \cite{SCbk}. As in the case $f=0$, we can prove a local Neumann Poincar\'e inequality for smooth functions in $(M^n,g,d\mu)$, whenever we can compare the volume form of the weighted manifold to the volume form of an $a$-dimensional manifold with Ricci curvature bounded below. This is done by following the proof of Theorem 5.6.5 in \cite{SCbk}. Furthermore, we can prove a local Sobolev inequality in a similar manner. The Sobolev inequality then implies a mean value inequality for $f$-subharmonic functions as well as a mean value inequality for solutions to the drifting heat equation. We use these to prove a Gaussian estimate for the drifting heat kernel.

For any point $x\in M$ we denote the Riemannian volume form in geodesic cordinates at $x$ by
\[
dv(\exp_x(r\xi))=J(x,r,\xi) \, dr \, d\xi
\]
for $r>0$ and $\xi$ any unit tangent vector at $x$. Then the $f$-volume form in geodesic coordinates is given by
\[
J_f(x,r,\xi) = e^{-f} J(x,r,\xi).
\]
If $y=\exp_x(r\xi)$ is a point that does not belong to the cut-locus of $x$, then
\[
\Delta r(x,y) = \frac{J'(x,r,\xi)}{J(x,r,\xi)} \ \ \ \text{and} \ \ \  \Delta_f r(x,y) = \frac{J_f'(x,r,\xi)}{J_f(x,r,\xi)}
\]
where $r(x,y)=d(x,y)$ and the derivatives are taken in the radial direction. The first equality is the key element in Bishop's volume comparison theorem  under the assumption of a uniform  Laplacian upper bound. Analogously, on weighted manifolds, the second equality provides us with weighted volume comparison results whenever we have a uniform  drifting Laplacian upper bound. We will be showing that it is in fact sufficient to have a uniform volume form comparison assumption.

\begin{Def} \label{VF}
We say that the smooth metric measure space $(M^n,g,d\mu)$ satisfies the property \eref{VR}, if there exists a positive and nondecreasing function $A(R)$ and a uniform constant $a$ (independent of $R$) such that for all $x\in B_{x_o}(R)$ and $0<r_1<r_2<R$
\begin{equation} \label{VR}
\frac{J_f(x,r_2,\xi)}{J_f(x,r_1,\xi)} \leq \left(  \frac{r_2}{r_1}\right)^a \, e^{A(R)}. \tag{$V_R$}
\end{equation}
The above inequality is assumed for all points  $\exp_x(r_i\xi)$ that do not belong to the cut locus of $x$.
\end{Def}

The classical integration argument in Bishop's theorem gives us the following volume comparison result
\begin{lem} \label{VC0}
Suppose that   $(M^n,g,d\mu)$  satisfies the property \eref{VR} for all $x\in B_{x_o}(R)$. Then for any $0<r_1< r_2 <R$
\[
\frac{V_f(x,r_2)}{V_f(x,r_1)} \leq \left(  \frac{r_2}{r_1}\right)^{a+1} \, e^{A(R)}.
\]
\end{lem}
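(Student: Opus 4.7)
The plan is to mimic the classical Gromov--Bishop integration trick, replacing the comparison Jacobian $r^{n-1}$ with $r^a$. Writing the weighted volume in geodesic polar coordinates,
\[
V_f(x,r) = \int_{S^{n-1}} \int_0^r J_f(x,s,\xi)\,ds\,d\xi,
\]
where by convention $J_f(x,s,\xi)=0$ once $\exp_x(s\xi)$ is beyond the cut locus, the proof reduces to comparing $\int_0^{r_2} J_f(x,s,\xi)\,ds$ with $\int_0^{r_1} J_f(x,s,\xi)\,ds$ for each fixed $\xi$, and then integrating in $\xi$.

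The first step is to observe that the hypothesis $(V_R)$ gives a pointwise comparison of $J_f$ between an inner radius $t\in(0,r_1)$ and an outer radius $s\in(r_1,r_2)$: as long as the relevant points are not beyond the cut locus,
\[
t^a\,J_f(x,s,\xi) \;\leq\; s^a\,e^{A(R)}\,J_f(x,t,\xi),
\]
and the inequality is preserved (trivially) once we extend $J_f$ by zero past the cut locus. Taking $s=t$ in $(V_R)$ also shows $e^{A(R)}\geq 1$, which we will use at the end.

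Next I would integrate this inequality first in $t$ over $(0,r_1)$ and then in $s$ over $(r_1,r_2)$. The $t$-integration produces $\int_0^{r_1} t^a\,dt = r_1^{a+1}/(a+1)$ on the left and $\int_0^{r_1} J_f(x,t,\xi)\,dt$ on the right, and the subsequent $s$-integration produces $(r_2^{a+1}-r_1^{a+1})/(a+1)$ on the right. After cancelling the common factor $1/(a+1)$ and dividing by $r_1^{a+1}$, this yields
\[
\int_{r_1}^{r_2} J_f(x,s,\xi)\,ds \;\leq\; \left(\Bigl(\tfrac{r_2}{r_1}\Bigr)^{a+1}-1\right) e^{A(R)}\int_0^{r_1} J_f(x,t,\xi)\,dt.
\]

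Finally, adding $\int_0^{r_1} J_f(x,t,\xi)\,dt$ to both sides and using $e^{A(R)}\geq 1$ to absorb the constant term gives
\[
\int_0^{r_2} J_f(x,s,\xi)\,ds \;\leq\; \Bigl(\tfrac{r_2}{r_1}\Bigr)^{a+1} e^{A(R)}\int_0^{r_1} J_f(x,t,\xi)\,dt,
\]
and integrating in $\xi\in S^{n-1}$ delivers the claimed bound on $V_f(x,r_2)/V_f(x,r_1)$. There is no substantial obstacle here; the only bookkeeping issue is to ensure that extending $J_f$ by zero past the cut locus is compatible with the pointwise comparison, but this is automatic because the inequality only needs to be used when $J_f(x,t,\xi)>0$ at the inner radius, and in that case $t<r_1<R$ so the hypothesis applies directly.
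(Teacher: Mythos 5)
Your argument is correct and is precisely the classical Bishop--Gromov integration trick that the paper invokes without detail for this lemma: the pointwise rearrangement of \eref{VR} into $t^a J_f(x,s,\xi)\leq s^a e^{A(R)}J_f(x,t,\xi)$, the double integration in $t$ and then $s$, and the absorption of the additive term using $e^{A(R)}\geq 1$ all check out, and integrating over $\xi$ gives exactly the stated bound. One small nitpick on the cut-locus bookkeeping: the case in which the pointwise comparison is nontrivial is when $J_f(x,s,\xi)>0$ at the \emph{outer} radius (then both $\exp_x(t\xi)$ and $\exp_x(s\xi)$ lie before the cut point along $\xi$ and the hypothesis applies), rather than when the inner-radius Jacobian is positive as you phrase it; this does not affect the validity of the proof.
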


As mentioned above, the proof   in \cite{SCbk}*{Theorem 5.6.5} illustrates that assumption \eref{VR}, and the consequent volume comparison result that follows from it, are enough to prove a local Neumann Poincar\'e inequality  on the weighted manifold (see also the proof  in \cite{MuW}*{Lemma 3.4} and  \cite{WuWu}*{Lemma 2.3},  and\cite{Wu1}*{Lemma 2.4}). The local Sobolev inequality then follows as in \cite{SC2}*{Theorem 2.1}. We summarize these results below
\begin{lem} \label{L3}
Let $(M^n,g,d\mu)$  be a metric measure space that satisfies the property \eref{VR} for all $x\in B_{x_o}(R)$.
Then for any $x\in B_{x_o}(R)$, $0<r<R$ and  $\varphi\in \mathcal  C^{\infty} (B_x(r))$
\[
\int_{B_x(r)} |\varphi - \varphi_{B_x(r)}|^2\, d\mu \leq C_1 e^{C_2 A(R)} \, r^2 \int_{B_x(r)}|\n \varphi|^2\, d\mu
\]
where $C_1=C_1(n,a)$,  $C_2=C_2(n)$ and $\varphi_{B_x(r)}=V_f(x,r)^{-1}\int_{B_x(r)} \varphi \, d\mu.$

Furthermore, there exist constants $ \nu= \nu(n)>2$, $C_3(n,a)$ and $C_4(n)$  such that for all  $\varphi\in \mathcal C^{\infty}_0 (B_x(r))$
\begin{equation} \label{Sob}
\left(\int_{B_x(r)} |\varphi|^{\frac{2\nu}{\nu-2}}\, d\mu \right)^{\frac{\nu-2}{\nu}} \leq C_3 \frac{e^{C_4 A(R) }\, r^2 }{V_f(x,r)^{\frac{2}{\nu}}} \int_{B_x(r)}(\,|\n \varphi|^2 + r^{-2}|\varphi|^2\,)\, d\mu.
\end{equation}
\end{lem}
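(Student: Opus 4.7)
The plan is to adapt the Saloff-Coste framework (\cite{SCbk}*{Theorem 5.6.5} for Poincar\'e and \cite{SC2}*{Theorem 2.1} for Sobolev) to the weighted setting, carefully tracking the $R$-dependence of all constants. The essential inputs are the pointwise volume form comparison \eref{VR} and the resulting weighted doubling estimate of Lemma \ref{VC0}, both with constant of order $e^{A(R)}$ on $B_{x_o}(R)$.

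For the Neumann--Poincar\'e inequality, my plan is a pseudo-Poincar\'e argument along radial geodesics. Starting from the standard identity
\[
\int_{B_x(r)} |\varphi - \varphi_{B_x(r)}|^2 \, d\mu \leq \frac{1}{2 V_f(x,r)} \int_{B_x(r) \times B_x(r)} |\varphi(y)-\varphi(z)|^2 \, d\mu(y)\, d\mu(z),
\]
I would bound $|\varphi(y)-\varphi(z)|$ by the integral of $|\nabla \varphi|$ along the broken geodesic $y \to x \to z$. Passing to polar coordinates at $x$ and applying Cauchy--Schwarz to the radial integrals reduces the task to bounding expressions of the form $\int_0^r \int_0^s |\nabla \varphi|^2(\exp_x(t\xi))\, dt\, J_f(x,s,\xi)\, ds\, d\xi$. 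The pointwise bound $J_f(x,s,\xi)/J_f(x,t,\xi) \leq (s/t)^a e^{A(R)}$ from \eref{VR} is precisely what allows one to exchange the $s,t$ integration order and recognize the result as $r^2 e^{A(R)}$ times the Dirichlet energy. This produces the first estimate, with $C_1=C_1(n,a)$ absorbing the polynomial factor $(s/t)^a$ and $C_2=C_2(n)$ the bounded number of compositions of doubling that arise when passing from concentric to a single ball.

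For the local Sobolev inequality, my plan is to combine the Poincar\'e inequality with the doubling property $V_f(x,2s) \leq 2^{a+1} e^{A(R)} V_f(x,s)$ via the Nash/Moser-iteration scheme of \cite{SC2}. The $(2,2)$-Poincar\'e inequality on concentric balls, together with doubling, yields a scale-invariant Sobolev inequality of the form \eref{Sob} for any fixed $\nu > 2$ chosen once and for all in terms of $n$. Each iteration step contributes one factor of the doubling or Poincar\'e constant, but only boundedly many steps are needed, so the final constant has the form $C_3 e^{C_4 A(R)}$ as claimed.

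The principal obstacle is controlling the cumulative $A(R)$-dependence. Each Poincar\'e or doubling step costs a factor of $e^{c A(R)}$, and one must ensure that the finite iteration yielding the Sobolev inequality composes these only a bounded number of times, independent of $R$; otherwise one would end up with $e^{A(R)^k}$ or $e^{c A(R) \log R}$ rather than $e^{C_4 A(R)}$. This boundedness is built into the Saloff-Coste machinery once one works with scale-invariant inequalities throughout, but it must be verified step by step in the weighted setting since $A(R)$ is allowed to grow with $R$.
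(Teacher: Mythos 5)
Your Sobolev step is sound and is exactly the route the paper takes: the Poincar\'e inequality plus the doubling bound of Lemma \ref{VC0} are fed into the scheme of \cite{SC2}*{Theorem 2.1}, and since the doubling and Poincar\'e constants enter the final Sobolev constant only to a bounded power, the form $C_3e^{C_4A(R)}$ survives. The gap is in your proof of the Poincar\'e inequality. Chaining through the fixed center $x$ leads, after Fubini, to the quantity $\int_t^{r}J_f(x,s,\xi)\,ds$ sitting against $|\n\varphi|^2(\exp_x(t\xi))$, and \eref{VR} only gives
\[
\int_t^{r}J_f(x,s,\xi)\,ds\ \le\ e^{A(R)}J_f(x,t,\xi)\int_t^{r}\Bigl(\frac{s}{t}\Bigr)^a ds\ \le\ \frac{e^{A(R)}}{a+1}\,J_f(x,t,\xi)\; r\,\Bigl(\frac{r}{t}\Bigr)^{a}.
\]
The factor $(r/t)^a$ is unbounded as $t\to0$, so what your computation actually yields is
\[
\int_{B_x(r)}|\varphi-\varphi_{B_x(r)}|^2\,d\mu\ \le\ C\,e^{A(R)}\,r^2\int_{B_x(r)}\Bigl(\frac{r}{d(x,y)}\Bigr)^{a}|\n\varphi|^2(y)\,d\mu(y),
\]
and the singular weight at the center cannot be removed: already in $\R^n$ with $f=0$, taking $|\n\varphi|^2$ concentrated on $B_x(\eps)$ makes the weighted Dirichlet integral larger than the unweighted one by a factor of order $(r/\eps)^{a}$. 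This is the familiar defect of routing every geodesic through a single point --- their images pile up near the center, where the measure is small --- so the step ``recognize the result as $r^2e^{A(R)}$ times the Dirichlet energy'' fails.

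The repair, which is the argument of \cite{SCbk}*{Theorem 5.6.5} that the paper invokes (see also \cite{MuW}*{Lemma 3.4}, \cite{WuWu}*{Lemma 2.3}), is to join $y$ and $z$ by a \emph{direct} minimizing geodesic of length $\ell=d(y,z)$ and split the parameter range at $\ell/2$: for $s\ge\ell/2$ one applies \eref{VR} based at the starting endpoint, so the relevant Jacobian ratio is bounded by $(\ell/s)^a e^{A(R)}\le 2^a e^{A(R)}$, and symmetrically from the other endpoint for $s\le\ell/2$; the polynomial factor is then controlled by $2^a$ and only a single factor $e^{A(R)}$ survives into $C_1e^{C_2A(R)}$. (Equivalently, averaging your broken path over centers $x'$ ranging over $B_x(r/2)$ integrates away the singularity.) Note that this is precisely why \eref{VR} is assumed at \emph{every} base point of $B_{x_o}(R)$ rather than only at $x$; your argument uses it only at the single center, which is a sign that it is not exploiting the full hypothesis.
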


As in the articles mentioned above, we can now use the Sobolev inequality to prove a local mean value inequality for positive solutions of the drifting heat equation \cite{SC2}*{Theorem 3.1}, \cite{WuWu}*{Proposition 2.6}.
\begin{lem} \label{L4}
Let $(M^n,g,d\mu)$ be a smooth metric measure space that satisfies the local Sobolev inequality \eref{Sob} of {\rm Lemma \ref{L3}} for all  $\varphi\in \mathcal C^{\infty}_0 (B_{x_o}(\rho))$ and $0<\rho<R$. Let $u$ be a smooth positive solution of the drifting heat equation
\[
\p_t u -\Delta_f u =0
\]
in the cylinder $Q=B_{x_o}(r)\times (s- r^2, s)$ for some $s\geq 0$ and $r<R$. Then for any $0<\delta < \delta'\leq 1$ there exist constants $C_{5}(n,a,\nu), C_{6}(n,\nu)$  such that
\[
\sup_{Q_{\delta}} u \leq  C_{5} \frac{e^{C_{6} A(R)}}{(\delta'-\delta)^{2+\nu}\, r^2\, V_f(x_o,r) }\;\int_{Q_{\delta'}} u\; d\mu\, dt
\]
where $Q_{\alpha}=B_{x_o}(\alpha r)\times (s- \alpha\, r^2, s)$.
\end{lem}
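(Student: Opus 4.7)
The plan is to prove the lemma by parabolic Moser iteration, using the Sobolev inequality~\eqref{Sob} of Lemma~\ref{L3} as the key ingredient. The strategy is to bootstrap from an $L^{p+1}$ bound on a cylinder $Q_{\delta'}$ to an $L^{(p+1)\kappa}$ bound on a slightly smaller cylinder $Q_\delta$, where $\kappa = 1 + 2/\nu > 1$, and then iterate with geometrically shrinking cylinders to pass from $L^1$ to $L^\infty$.

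\emph{Energy estimate.} Fix $p \geq 0$ and a smooth cutoff $\eta(x,t)$ supported in $Q_{\delta'}$ with $\eta \equiv 1$ on $Q_\delta$, satisfying $|\nabla \eta| \leq C/((\delta'-\delta)r)$ and $|\partial_t \eta| \leq C/((\delta'-\delta)^2 r^2)$. Multiplying $\partial_t u = \Delta_f u$ by $\eta^2 u^p$, integrating against $d\mu$, and using that $\Delta_f$ is self-adjoint with respect to the weighted measure (so integration by parts in space has no boundary contribution), then absorbing the cross terms with Cauchy--Schwarz, produces the standard Caccioppoli-type inequality
\begin{equation*}
\sup_\tau \int \eta^2(\cdot,\tau) u^{p+1}\, d\mu + \int\!\!\int \eta^2 |\nabla u^{(p+1)/2}|^2\, d\mu\, dt \leq \frac{C(p+1)}{(\delta'-\delta)^2 r^2}\int\!\!\int_{Q_{\delta'}} u^{p+1}\, d\mu\, dt.
\end{equation*}

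\emph{Sobolev step.} Set $w = u^{(p+1)/2}$. Applying~\eqref{Sob} to $\eta(\cdot,t) w(\cdot,t)$ on each time slice, integrating in $t$, and interpolating via H\"older
\[
\int\!\!\int (\eta w)^{2\kappa}\, d\mu\, dt \leq \Bigl(\sup_\tau \|\eta w\|_{L^2}^2\Bigr)^{\!2/\nu} \int \|\eta w\|_{L^{2\nu/(\nu-2)}}^{2}\, dt,
\]
together with $|\nabla(\eta w)|^2 \leq 2\eta^2 |\nabla w|^2 + 2 w^2 |\nabla \eta|^2$, and inserting the energy estimate, yields the one-step improvement
\[
\|u\|_{L^{(p+1)\kappa}(Q_\delta)} \leq \left[\frac{C^{\kappa+1}\,e^{C_4 A(R)} (p+1)^\kappa}{V_f(x_o,r)^{2/\nu}\, r^{2(\kappa-1)}\, (\delta'-\delta)^{2\kappa}}\right]^{\!1/((p+1)\kappa)} \|u\|_{L^{p+1}(Q_{\delta'})},
\]
with all norms taken with respect to $d\mu\, dt$.

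\emph{Iteration.} Set $\alpha_k = \kappa^k$ (so $p_k+1 = \alpha_k$) and $\delta_k = \delta + (\delta'-\delta) 2^{-k}$, and apply the one-step improvement from $\alpha_k$ to $\alpha_{k+1}$. The total exponent with which each constant factor appears in the infinite product is $\sum_{k\geq 0} \alpha_{k+1}^{-1} = 1/(\kappa-1) = \nu/2$, which produces exactly $V_f(x_o,r)^{-1}$, $r^{-2}$, and $(\delta'-\delta)^{-(\nu+2)}$ in the final bound, together with $e^{C_6 A(R)}$ for some $C_6=C_6(n,\nu)$. The $k$-dependent factors $(p_k+1)^\kappa$ and $2^{2\kappa(k+1)}$ contribute only finite $(n,\nu)$-constants, since $\sum_k k\kappa^{-k} < \infty$. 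Passing to the limit $k\to\infty$, the left-hand side becomes $\sup_{Q_\delta} u$ and the right-hand side becomes $\|u\|_{L^1(Q_{\delta'})}$, giving the stated inequality.

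The main obstacle is precisely this bookkeeping in the third step: the combination $r^{-2}\,V_f(x_o,r)^{-1}$ and the exponent $2+\nu$ on $(\delta'-\delta)$ must emerge \emph{simultaneously} from one geometric series, which is possible only because~\eqref{Sob} carries the non--scale-invariant prefactor $r^2/V_f^{2/\nu}$; any accidental mismatch between the powers of $\nu$ in Sobolev and the multiplier $\kappa = 1+2/\nu$ of the iteration would spoil this cancellation.
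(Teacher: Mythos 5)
Your strategy---parabolic Moser iteration driven by the Sobolev inequality \eqref{Sob}---is exactly the route behind this lemma: the paper gives no proof of its own and simply invokes \cite{SC2}*{Theorem 3.1} and \cite{WuWu}*{Proposition 2.6}, both of which are iterations of this type. Your bookkeeping in the Sobolev and iteration steps is also correct: the total exponent $\sum_{k\ge0}\kappa^{-(k+1)}=\nu/2$ does convert $V_f^{-2/\nu}$, $r^{-2(\kappa-1)}$ and $(\delta'-\delta)^{-2\kappa}$ into $V_f^{-1}$, $r^{-2}$ and $(\delta'-\delta)^{-(\nu+2)}$.

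The gap is in the very first rung of the ladder. You start the iteration at $\alpha_0=\kappa^0=1$, i.e.\ $p_0=0$, but the Caccioppoli inequality you assert degenerates there. Testing $\partial_t u=\Delta_f u$ against $\eta^2u^{p}$ and integrating by parts produces the gradient term with coefficient $4p/(p+1)^2$ (since $|\nabla u^{(p+1)/2}|^2=\tfrac{(p+1)^2}{4}u^{p-1}|\nabla u|^2$), and the cross term is absorbed at the cost of a factor $1/p$; both break down as $p\to0^+$. At $p=0$ the quantity $\int\!\!\int\eta^2|\nabla u^{1/2}|^2\,d\mu\,dt=\tfrac14\int\!\!\int\eta^2u^{-1}|\nabla u|^2\,d\mu\,dt$ is simply not controlled by $C(\delta'-\delta)^{-2}r^{-2}\int\!\!\int u\,d\mu\,dt$, and you cannot dodge this by integrating by parts twice onto the cutoff, since $\Delta_f\eta$ contains $\nabla f\cdot\nabla\eta$, over which there is no control in this setting. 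So the step from $L^1$ to $L^{\kappa}$ does not follow from your energy estimate; the iteration as written yields only the $L^{p}\to L^\infty$ bound for fixed $p>1$ (with constant blowing up as $p\to1^+$).

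The standard repair, used in the references the paper cites and in \cite{SCbk}*{Theorem 5.2.9}, is a two-step argument. First run your iteration starting at $p+1=2$ to obtain, for all $\delta\le\sigma<\sigma'\le\delta'$,
\begin{equation*}
\sup_{Q_{\sigma}}u\;\le\;\frac{C\,e^{CA(R)}}{(\sigma'-\sigma)^{(\nu+2)/2}\,r\,V_f(x_o,r)^{1/2}}\Bigl(\int\!\!\int_{Q_{\sigma'}}u^{2}\,d\mu\,dt\Bigr)^{1/2}.
\end{equation*}
Then estimate $\int\!\!\int_{Q_{\sigma'}}u^{2}\le\bigl(\sup_{Q_{\sigma'}}u\bigr)\int\!\!\int_{Q_{\delta'}}u$, apply Young's inequality to get
\begin{equation*}
\sup_{Q_{\sigma}}u\;\le\;\frac12\sup_{Q_{\sigma'}}u+\frac{C\,e^{CA(R)}}{(\sigma'-\sigma)^{\nu+2}\,r^{2}\,V_f(x_o,r)}\int\!\!\int_{Q_{\delta'}}u\,d\mu\,dt,
\end{equation*}
and absorb the half-supremum by iterating over radii $\sigma_j=\delta+(1-2^{-j})(\delta'-\delta)$. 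This produces exactly the stated constants; with that lemma inserted in place of your first iteration step, the argument is complete.
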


Moreover, we obtain a mean value inequality for subsolutions to the drifting heat equation as in \cite{SCbk}*{Theorem 5.2.9}
\begin{lem} \label{L4b}
Let $(M^n,g,d\mu)$ be a smooth metric measure space that satisfies the local Sobolev inequality \eref{Sob} of {\rm Lemma \ref{L3}} for all  $\varphi\in \mathcal C^{\infty}_0 (B_{x_o}(\rho))$ and $0<\rho\leq R$. Fix $0<p <\infty$ and let $v$ be a  positive subsolution of the drifting heat equation such that
\[
\p_t v -\Delta_f v \leq 0
\]
in the cylinder $Q=B_{x_o}(r)\times (s- r^2, s)$ for some $s\geq 0$ and $r<R$. Then for any $0<\delta < \delta'\leq 1$ there exist constants $C_{7}(n,a,\nu,p), C_{8}(n,\nu,p)$ such that
\[
\sup_{Q_{\delta}} u^p \leq  C_{7} \frac{e^{C_{8} A(R)}}{(\delta'-\delta)^{2+\nu}\, r^2\, V_f(x_o,r) }\;\int_{Q_{\delta'}} u^p\; d\mu\, dt.
\]
\end{lem}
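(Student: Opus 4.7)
The plan is to establish this via Moser iteration, following the classical scheme adapted to the drifting Laplacian setting (as in Saloff-Coste's Theorem 5.2.9). The key point is that all the analytical ingredients we need---the volume doubling implicit in Lemma \ref{VC0} and the Sobolev inequality \eqref{Sob}---are already in place, so the proof of Lemma \ref{L4} can be modified essentially verbatim for subsolutions rather than solutions. Indeed, the proof of Lemma \ref{L4} already relies on the fact that if $u$ is a nonnegative solution, then $u^p$ for $p\geq 1$ is a subsolution; here we start with a subsolution and get the same conclusion for $p\geq 1$ directly.

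First, for $q \geq 1$ I would multiply the inequality $\partial_t v - \Delta_f v \leq 0$ by $\phi^2 v^{2q-1}$, where $\phi$ is a spatial cutoff supported in $B_{x_o}(\rho_1)$ with $\phi \equiv 1$ on $B_{x_o}(\rho_2)$ and $|\nabla \phi| \leq 2/(\rho_1-\rho_2)$, and integrate in space against $d\mu$. Integration by parts against the weighted measure (which cancels the drift term in $\Delta_f$) yields, after also incorporating a temporal cutoff $\eta$ and using Cauchy--Schwarz to absorb cross terms, the standard energy estimate
\[
\sup_{t}\int \phi^2\eta^2 v^{2q}\, d\mu + \iint \eta^2|\nabla(\phi v^q)|^2\, d\mu\, dt \;\leq\; C\,q^2\!\left(\tfrac{1}{(\rho_1-\rho_2)^2}+\tfrac{1}{\tau_1-\tau_2}\right)\!\iint_{Q_{\rho_1}}\!v^{2q}\, d\mu\, dt.
\]
Combining this with the Sobolev inequality \eqref{Sob} applied in the spatial variable to $\phi\eta v^q$, together with the elementary parabolic interpolation $\|w\|_{L^{2(1+2/\nu)}}^{2(1+2/\nu)} \leq \|w\|_{L^\infty_t L^2_x}^{4/\nu}\|w\|_{L^2_t L^{2\nu/(\nu-2)}_x}^{2}$, produces the self-improving estimate
\[
\left(\iint_{Q_{\rho_2,\tau_2}}\! v^{2q\theta}\, d\mu\, dt\right)^{1/\theta}\leq \frac{C\,q^2 e^{C A(R)}}{V_f(x_o,r)^{2/\nu}(\rho_1-\rho_2)^2}\,\iint_{Q_{\rho_1,\tau_1}}\! v^{2q}\, d\mu\, dt,
\]
with $\theta = 1+2/\nu > 1$, valid for all $2q\geq 2$.

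Next I would iterate: choose $q_k = \theta^k(p/2)$ (assuming first $p \geq 2$), with $\rho_k$ a geometric sequence of radii decreasing from $\delta' r$ to $\delta r$ and analogous $\tau_k$. The resulting product telescopes with $\sum k\theta^{-k}<\infty$, and the exponents $q^2$ absorb similarly, leaving $(\delta'-\delta)^{-(2+\nu)}$ as the final polynomial factor and $e^{CA(R)}$ (with a new $C$) from the single application of \eqref{Sob}; taking $k \to \infty$ yields the stated bound for $p\geq 2$. For the range $0 < p < 2$, I would invoke the Bombieri--De~Giorgi--Giusti iteration trick: once we have a bound of the form $\sup_{Q_\delta} v^{p_0} \leq C(\delta'-\delta)^{-(2+\nu)}\Xi \int_{Q_{\delta'}} v^{p_0}\, d\mu\, dt$ for some $p_0$ and all $1/2 \leq \delta < \delta' \leq 1$, one interpolates $v^{p_0}\leq (\sup v^{p_0-p})\cdot v^p$ and absorbs $\sup$ back into the LHS via Young's inequality applied to a shrinking sequence of radii, producing the same bound with $p$ in place of $p_0$ at the cost of worse constants depending on $p$.

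The main technical point to be careful about is tracking how the constants propagate through the iteration: each iteration step contributes a factor involving $q^2$, the $V_f(x_o,r)^{2/\nu}$ normalization from Sobolev, and the $A(R)$ exponential, and one must verify that summing $\sum_k \theta^{-k}\log q_k$ converges and that the $e^{CA(R)}$ factor appears only with a finite multiple of $A(R)$ rather than a divergent series. The other subtle piece is the extension to $p<1$: the Bombieri--Giusti argument requires the preliminary bound to be homogeneous enough in $\delta'-\delta$ with no additive terms, which our estimate provides, so this step is really just bookkeeping. I expect no conceptual obstacle beyond what appears already in Lemma \ref{L4}; the only structural difference from \cite{SCbk}*{Theorem 5.2.9} is the presence of the factor $e^{C_8 A(R)}$, which is inherited linearly from \eqref{Sob} and therefore transports unchanged through the Moser scheme.
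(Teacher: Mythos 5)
Your proposal is correct and follows exactly the route the paper takes: the paper proves this lemma by citing the Moser iteration argument of Saloff-Coste's Theorem 5.2.9, and your outline (energy estimate, Sobolev plus parabolic interpolation, geometric iteration of radii and exponents, Bombieri--De Giorgi--Giusti for small $p$) is precisely that argument adapted to the weighted measure, with the $e^{CA(R)}$ factor from \eqref{Sob} accumulating only through the convergent sum $\sum_k\theta^{-k}$ as you note.
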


The following corollary is immediate by setting $p=1$ and observing that an $f$-subharmonic function is also a subsolution to the drifting heat equation
\begin{corl} \label{MVI}
Let $(M^n,g,d\mu)$ be a smooth metric measure space as in {\rm Lemma \ref{L4b}}. Let $v$ be a  positive $f$-subharmonic function on $M$. Then there exist constants $C_9(n,a)$, $C_{10}(n)$ such that
\[
\sup_{B_{x_o}(\frac R2)} u \leq  C_{9} \frac{e^{C_{10} A(R)}}{R^2\, V_f(x_o,R) }\;\int_{B_{x_o}(R)} u \; d\mu.
\]
\end{corl}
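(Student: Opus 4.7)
The plan is to deduce this directly from Lemma~\ref{L4b} with $p=1$, as indicated in the preceding sentence. The key observation is that any positive $f$-subharmonic function $u$, regarded as a time-independent function on $M\times\mathbb{R}$, is a nonnegative subsolution of the drifting heat equation: since $\partial_t u=0$ and $\Delta_f u\ge 0$,
\[
\partial_t u-\Delta_f u = -\Delta_f u\le 0,
\]
so $u$ fits the hypotheses of Lemma~\ref{L4b} on every parabolic cylinder of the form $Q=B_{x_o}(r)\times(s-r^2,s)$.

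I would then set $r=R$, pick any $s\ge R^2$, and apply Lemma~\ref{L4b} with $p=1$, $\delta'=1$, $\delta=1/2$. The time-independence of $u$ lets the supremum collapse to the spatial supremum, $\sup_{Q_{1/2}} u=\sup_{B_{x_o}(R/2)} u$, while the parabolic integral factors cleanly as
\[
\int_{Q_1} u\, d\mu\, dt = R^2\int_{B_{x_o}(R)} u\, d\mu.
\]
Plugging these into Lemma~\ref{L4b} and absorbing the dimensional factor $(1/2)^{-(2+\nu)}=2^{2+\nu}$ into the prefactor yields the claimed inequality with constants $C_9=2^{2+\nu}C_7$ and $C_{10}=C_8$.

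The hypothesis of Lemma~\ref{L4b}, namely the local Sobolev inequality \eref{Sob} on balls centered at $x_o$, is furnished by Lemma~\ref{L3} under the standing assumption \eref{VR}, so no additional work is required. There is no substantive obstacle here: the corollary is a formal consequence of the parabolic mean value inequality once $u$ is recognized as a time-independent nonnegative subsolution, and the constants depend only on $n$ and $a$ through those coming from Lemma~\ref{L4b}.
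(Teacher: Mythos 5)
Your proposal is exactly the paper's argument: the paper proves this corollary in one line, by setting $p=1$ in Lemma~\ref{L4b} and observing that a positive $f$-subharmonic function, viewed as time-independent, is a subsolution of the drifting heat equation. One small bookkeeping point: the factor $R^2$ produced by the time integral, $\int_{Q_1}u\,d\mu\,dt=R^2\int_{B_{x_o}(R)}u\,d\mu$, cancels the $R^2$ in the denominator of Lemma~\ref{L4b}, so your computation actually yields $\sup_{B_{x_o}(R/2)}u\leq C_9\,e^{C_{10}A(R)}\,V_f(x_o,R)^{-1}\int_{B_{x_o}(R)}u\,d\mu$ rather than the literally stated bound with the extra $R^{-2}$; this weaker form is what the one-line derivation gives (the paper's statement appears to carry a vestigial $R^2$) and is all that is used later, since in the applications $A(R)=bR^2$ absorbs any polynomial factor.
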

The bottom of the Rayleigh quotient on the weighted manifold is defined as
\[
\lambda_1(M) =\inf_{g\in \mathcal C_o^{\infty}(M)} \frac{\int_M |\n g|^2 \, d\mu}{\int_M |g|^2 \, d\mu}.
\]
Whenever $f$ has linear growth rate at a point, Munteanu and Wang prove an  upper bound for $\lambda_1(M)$ in the case $\textup{Ric}_f\geq 0$~\cite{MuW} and a sharp upper upper bound when $\textup{Ric}_f\geq -(n-1)$~\cite{MuW2}.

For any  compact domain, $\Omega \subset M$, the drifting Laplacian $\Delta_f$ with Dirichlet boundary conditions is a nonpositive, densely defined and self-adjoint operator on $L^2(\Omega,e^{-f}dv).$ Moreover, the first eigenvalue of the spectrum of $\Delta_f$ for the Dirichlet problem satisfies  $\lambda_1(\Omega)>0.$

We also make the following observation: Let $\{\Omega_i\}$ be a sequence of compact  sets  such that the boundary of each $\Omega_i$ is piecewise smooth, $\Omega_i\subset \Omega_{i+1}$ and $\cup_i \Omega_i = M$. The Dirichlet drifting heat kernel of $\Delta_f$ on $\Omega_i$, $H_i(x,y,t),\ $ is defined and has the properties
\begin{align*}
(\frac{\p}{\p t} -\Delta_{f,y}) H_i(x,y,t) & =0 \ \ \  \textup{on} \ \ \Omega_i\times\Omega_i \times(0,T)\\
\lim_{t\to 0}  H_i(x,y,t) & = \delta_x(y)\; e^{f(x)}.
\end{align*}
Furthermore, $H_i(x,y,t)>0$ since $\Delta_f$ is self-adjoint and positive definite on $\{\Omega_i\}$. Letting $H_f(x,y,t)$ be drifting heat kernel on $M$, the maximum principle for the drifting heat equation implies that $H_i(x,y,t)\nearrow H_f(x,y,t)$  and that $H_f$ is positive.  Furthermore $\lambda_1(\Omega_i)\to \lambda_1(M).$

We will prove the following result

\begin{lem} \label{Lb2} Let
\[
u(x,t)=\int_M H_f(x,y,t)\, u_o(y)\, d\mu(y)
\]
be a solution to the drifting heat equation defined on $M\times[0,\infty)$.

If $g(x,t)$ satisfies
\[
|\n g|^2+g_t\leq 0 \ \ \ \textup{on} \ \  M\times (0,\infty),
\]
then
\[
F(t)= e^{2\lambda_1(M)\, t} \int_M e^{2g(x,t)}\, u^2(x,t)\, d\mu(x)
\]
is nonincreasing for $t\in(0,\infty)$.

\end{lem}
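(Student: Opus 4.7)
The plan is to show $F'(t)\le 0$ by differentiating under the integral and producing an identity in which the hypothesis $g_t+|\nabla g|^2\le 0$ controls one term and the variational characterization of $\lambda_1(M)$ controls another. The decisive observation is that, under the substitution $v=e^{g}u$, the cross term in the expansion of $|\nabla v|^2$ is precisely what is needed to absorb the first-order term produced when $\Delta_f u$ is integrated by parts against $e^{2g}u$.

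Differentiating under the integral, using $\partial_t u=\Delta_f u$ and the symmetry of $\Delta_f$ with respect to $d\mu$, one obtains
\begin{align*}
\frac{d}{dt}\int_M e^{2g}u^2\,d\mu
&= 2\int_M g_t\,e^{2g}u^2\,d\mu\\
&\quad -4\int_M e^{2g}u\,\nabla g\cdot\nabla u\,d\mu-2\int_M e^{2g}|\nabla u|^2\,d\mu.
\end{align*}
Setting $v=e^{g}u$ and expanding $|\nabla v|^2=e^{2g}|\nabla u|^2+2e^{2g}u\,\nabla g\cdot\nabla u+e^{2g}u^2|\nabla g|^2$ to rewrite the middle term, the two $|\nabla u|^2$ contributions cancel and one is left with
\[
\frac{d}{dt}\int_M e^{2g}u^2\,d\mu=2\int_M(g_t+|\nabla g|^2)\,e^{2g}u^2\,d\mu-2\int_M|\nabla v|^2\,d\mu.
\]
The first integral is $\le 0$ by hypothesis, while the Rayleigh quotient bound $\int_M|\nabla v|^2\,d\mu\ge\lambda_1(M)\int_M v^2\,d\mu=\lambda_1(M)\int_M e^{2g}u^2\,d\mu$ turns the second into a negative multiple of the integral appearing in $F(t)$. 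Substituting into $F'(t)=2\lambda_1(M)F(t)+e^{2\lambda_1(M)t}\frac{d}{dt}\int_M e^{2g}u^2\,d\mu$ then gives $F'(t)\le 0$.

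The main obstacle is making each step rigorous on noncompact $M$: interchanging $\tfrac{d}{dt}$ with the integral, discarding the boundary term in integration by parts, and using $v=e^{g}u$ as a test function in the infimum defining $\lambda_1(M)$, which ranges over $\mathcal{C}_o^{\infty}(M)$. The natural fix uses the compact exhaustion $\{\Omega_i\}$ and Dirichlet heat kernels $H_i$ introduced just before the lemma. Setting
\[
u_i(x,t)=\int_{\Omega_i}H_i(x,y,t)\,u_o(y)\,d\mu(y),\qquad F_i(t)=e^{2\lambda_1(\Omega_i)t}\int_{\Omega_i}e^{2g}u_i^2\,d\mu,
\]
the computation above goes through verbatim on each $\Omega_i$: all integrals are over a compact set, $u_i$ vanishes on $\partial\Omega_i$ so the boundary term is absent, and $v_i=e^{g}u_i$ is an admissible competitor (after approximation by $\mathcal{C}_o^{\infty}$ functions) for $\lambda_1(\Omega_i)$. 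Hence each $F_i$ is nonincreasing, and the monotone convergence $H_i\nearrow H_f$ together with $\lambda_1(\Omega_i)\to\lambda_1(M)$, noted in the paper, yields $F_i(t)\to F(t)$ pointwise; the pointwise limit of nonincreasing functions is nonincreasing, completing the proof.
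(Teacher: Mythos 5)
Your proposal is correct and follows essentially the same route as the paper: both reduce to the compact exhaustion $\{\Omega_i\}$ with Dirichlet heat kernels, apply the variational characterization of $\lambda_1(\Omega_i)$ to the test function $e^{g}u_i$, and use the identity $\int|\nabla(e^{g}u_i)|^2\,d\mu=\int|\nabla e^{g}|^2u_i^2\,d\mu-\int e^{2g}u_i\Delta_f u_i\,d\mu$ together with $g_t+|\nabla g|^2\le 0$ to conclude $F_i'\le 0$ before passing to the limit. The only difference is bookkeeping: you differentiate first and then invoke the Rayleigh quotient, while the paper starts from the eigenvalue inequality and then differentiates, which is the same algebra.
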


\begin{proof}
For $\{\Omega_i\},$ an exhaustion of $M$ with compact subsets, and $H_i(x,y,t)$ the Dirichlet heat kernel of $\Delta_f$ on $\Omega_i$, define
\[
u_i(x,t)=\int_{\Omega_i} H_i(x,y,t)\, u_o(y)\,d\mu(y)
\]
and let $\lambda_1(\Omega_i)>0$ be the first eigenvalue of the spectrum of $\Delta_f$ for the Dirichlet problem. $u_i(x,t)$ is now a nonnegative subsolution to the drifting heat equation with Dirichlet boundary conditions on $\Omega_i$ such that
\begin{equation*}
\begin{split}
(\frac{\p}{\p t} -\Delta_f)u_i &\leq 0 \ \ \  \textup{on} \ \ \Omega_i\times (0,T)\\
 u_i(x,t)& =0 \ \ \ \textup{on} \ \ \p\Omega_i\times (0,T).
\end{split}
\end{equation*}

We will first show that the function
\[
F_i(t)= e^{2\lambda_1(\Omega_i)\, t} \int_{\Omega_i} e^{2g(x,t)} u_i^2(x,t)\, d\mu(x)
\]
is nonincreasing for  $t\in(0,T)$. From the variational principle,
\begin{equation*}
\begin{split}
\lambda_1(\Omega_i) \int_{\Omega_i} e^{2g} u_i^2 \,d\mu & \leq \int_{\Omega_i} |\n ( e^{g} u_i)|^2\,d\mu  \\
&= \int_{\Omega_i} |\n e^{g}|^2 \,u_i^2 \,d\mu - \int_{\Omega_i}  e^{2g}\, u_i \,\Delta_f u_i \, d\mu \\
&\leq  \int_{\Omega_i} |\n e^{g}|^2 \,u_i^2 \,d\mu  - \int_{\Omega_i}  e^{2g} \,u_i\, (u_i)_t \,d\mu
\end{split}
\end{equation*}
after integration by parts and using the fact that $u_i$ is a nonnegative subsolution.
At the same time
\begin{equation*}
\begin{split}
\frac{\p}{\p t} \bigl( \int_{\Omega_i} e^{2g} \,u_i^2 \,d\mu \bigr) & = 2 \int_{\Omega_i}    e^{2g}\, g_t\, u_i^2\,d\mu  + 2\int_{\Omega_i}    e^{2g} \, u_i \,(u_i)_t\,d\mu  \\
& \leq -2 \int_{\Omega_i}  e^{2g} \,|\n g|^2 \,u_i^2 \,d\mu  +  2\int_{\Omega_i}    e^{2g} \, u_i \,(u_i)_t\,d\mu.
\end{split}
\end{equation*}
Combining the two we get
\begin{equation*}
\frac{\p}{\p t}\bigl( e^{2\lambda_1(\Omega_i)\, t} \int_{\Omega_i} e^{2g(x,t)} u_i^2(x,t)\, d\mu(x)  \;\bigr) \leq 0
\end{equation*}
which proves the claim for $F_i$.

The lemma follows by letting $i\to \infty$, since $\lambda_1(\Omega_i)\to \lambda_1(M)$ and $u_i\to u.$
\end{proof}

The next lemma extends the analogous result for the heat kernel of the Laplacian on a Riemannian manifold to the drifting heat kernel on a weighted manifold.

\begin{lem} \label{Lb4}
Let $B_1, B_2$ be two bounded subsets of $M$ with volume $V_1$ and $V_2$ respectively with respect to the measure $d\mu$. Then
\[
\int_{B_1}\int_{B_2} H_f(x,y,t) \, d\mu(y)\,d\mu(x) \leq e^{-\lambda_1(M)t}\, V_1^{1/2}\,V_2^{1/2} \, e^{-{\bf d}^2(B_1,B_2)/4t}
\]
for all $t>0$, where ${\bf d}(B_1,B_2)$ denotes the distance between the two sets.
\end{lem}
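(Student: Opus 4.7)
The plan is to invoke the Davies-style weighted $L^2$ method, with Lemma \ref{Lb2} furnishing the contraction estimate. Take $u_o=\mathbf{1}_{B_1}$ and set
$$u(x,t) = \int_M H_f(x,y,t)\,\mathbf{1}_{B_1}(y)\,d\mu(y).$$
Because $\Delta_f$ is self-adjoint with respect to $d\mu$, the kernel $H_f(x,y,t)$ is symmetric in $(x,y)$, so the quantity to bound equals $\int_{B_2} u(y,t)\, d\mu(y)$. By Cauchy--Schwarz,
$$\int_{B_2} u(y,t)\, d\mu(y) \;\leq\; V_2^{1/2}\,\Bigl(\int_{B_2} u^2(y,t)\, d\mu(y)\Bigr)^{1/2},$$
so it suffices to prove the weighted $L^2$ decay $\int_{B_2} u^2\, d\mu \leq V_1\, e^{-2\lambda_1(M)t}\, e^{-\mathbf{d}^2(B_1,B_2)/(2t)}$.

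For the weight in Lemma \ref{Lb2} I would take
$$g(x,s) = \frac{\phi(x)^2}{4(s+\epsilon)},\qquad \phi(x) = \min\{d(x,B_1),\,N\},$$
with parameters $\epsilon>0$ and $N>\mathbf{d}(B_1,B_2)$. Since $\phi$ is $1$-Lipschitz, $|\nabla g|^2 \leq \phi^2/(4(s+\epsilon)^2) = -g_s$, so the hypothesis $|\nabla g|^2 + g_s\leq 0$ holds on $M\times(0,\infty)$. Applying Lemma \ref{Lb2} then shows
$$F(s) = e^{2\lambda_1(M)s} \int_M e^{2g(x,s)}\,u^2(x,s)\, d\mu(x)$$
is nonincreasing for $s\in(0,\infty)$. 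I compute the initial value via the limit $s\to 0^+$: because $g(\cdot,s)$ is uniformly bounded by $N^2/(4\epsilon)$ and $u(\cdot,s)\to \mathbf{1}_{B_1}$ in $L^2(d\mu)$, while $\phi\equiv 0$ on $B_1$, dominated convergence yields $\lim_{s\to 0^+}F(s) = V_1$.

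Consequently $F(t)\leq V_1$. On $B_2$ one has $\phi(y)\geq \mathbf{d}(B_1,B_2)$, whence
$$\int_{B_2} u^2(y,t)\, d\mu(y) \;\leq\; V_1\, e^{-2\lambda_1(M)t}\, e^{-\mathbf{d}^2(B_1,B_2)/(2(t+\epsilon))}.$$
Plugging this into the Cauchy--Schwarz step and letting first $N\to\infty$ and then $\epsilon\to 0$ produces the claimed estimate. The main technical point I expect to have to address is the non-smoothness of $x\mapsto d(x,B_1)$, since Lemma \ref{Lb2} is formulated for smooth weights; this is standard to handle by mollifying $\phi$ to a smooth $(1+\eta)$-Lipschitz approximant and absorbing the $\eta$ via a harmless rescaling of $g$ after the estimate has been established.
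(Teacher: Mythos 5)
Your proof is correct, and it rests on the same key ingredient as the paper --- the integrated Gaussian estimate of Lemma \ref{Lb2} with a weight of the form $\mathbf{d}^2(\cdot,B_1)/4(t+\epsilon)$ --- but the final assembly is genuinely different. The paper applies Lemma \ref{Lb2} twice, with one weight centered on each of $B_1$ and $B_2$, combines the two bounds via Cauchy--Schwarz on $M$ together with the triangle inequality $\mathbf{d}^2(B_1,B_2)\le 2\mathbf{d}^2(x,B_1)+2\mathbf{d}^2(x,B_2)$, and then identifies $\int_M u_1u_2\,d\mu$ with the double integral of the kernel at time $2t$ through the semigroup property; the estimate at time $t$ is read off by rescaling. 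You instead apply Lemma \ref{Lb2} only to $u_1=e^{t\Delta_f}\mathbf{1}_{B_1}$, restrict the resulting weighted $L^2$ bound to $B_2$ (where the weight is at least $\mathbf{d}^2(B_1,B_2)/4(t+\epsilon)$), and finish with a plain Cauchy--Schwarz against $V_2^{1/2}$. This is shorter, works directly at time $t$, avoids the semigroup identity entirely, and produces exactly the same constants. Two further remarks in your favor: truncating the weight at level $N$ makes the evaluation of $\lim_{s\to 0^+}F(s)=V_1$ a clean dominated-convergence argument (the paper plugs in $s=0$ with an unbounded weight and is silent on this point, relying on the fact that $u_i(\cdot,0)$ is supported where $g_i$ vanishes); and the Lipschitz-versus-smooth issue you flag for $d(\cdot,B_1)$ is present identically in the paper's choice of $g_i$, so your mollification remark applies to both arguments and is the standard fix.
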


\begin{proof}
Define
\[
u_i(x,t)=\int_{B_i} H_f(x,y,t) \, d\mu(y)  \ \ \ \textup{and} \ \ \
g_i(x,t)=\frac{{\bf d}^2(x,B_i)}{4(t+\epsilon)}
\]
where ${\bf d}(x,B_i)$ is the distance between $x$ and the set $B_i$, and $\epsilon>0$ is constant. Then,
\[
|\n g_i|^2 = \frac{|{\bf d} \, \n {\bf d}|^2}{4(t+\epsilon)^2} \leq \frac{{\bf d}^2(x,B_i)}{4(t+\epsilon)^2} \ \ \ \textup{and} \ \ \
\frac{\p g_i}{\p t} =-\frac{{\bf d}^2(x,B_i)}{4(t+\epsilon)^2} .
\]
Therefore, we may apply Lemma \ref{Lb2} to each pair $u_i, g_i$ to get that
\[
\int_M e^{2g_i(x,t)} \,u_i^2(x,t)\, d\mu(x) \leq e^{-2\lambda_1(M)\, t} \int_M e^{2g_i(x,0)}\, u_i^2(x,0)\, d\mu(x)
\]
From the definition of the drifting heat kernel we know that
\[
u_i(x,0)=\int_{B_i} H_f(x,y,0) \, d\mu(y) = \chi(B_i)
\]
where $ \chi(B_i)$ is the characteristic function of $B_i$.

Given that $g_i(x,0)= 0 $ for all $x\in B_i$, we obtain
\[
\int_M e^{2g_i(x,0)}\, u_i^2(x,0)\, d\mu(x)= V_i
\]
hence,
\[
\int_M e^{2g_i(x,t)} \,u_i^2(x,t)\, d\mu(x) \leq e^{-2\lambda_1(M)\, t} \,V_i.
\]
From the triangle inequality,
\[
{\bf d}^2(B_1,B_2)\leq [\, {\bf d}(x,B_1)+{\bf d}(x,B_2)\,] \leq 2{\bf d}^2(x,B_1)+2{\bf d}^2(x,B_2)
\]
for any $x\in M$.  Therefore,
\begin{equation}   \label{L4e1}
\begin{split}
&\int_M  e^{{\bf d}^2(B_1,B_2)/8(t+\epsilon)} \,u_1(x) \,u_2(x) \, d\mu(x) \\
& \leq \int_M e^{{\bf d}^2(x,B_1)/4(t+\epsilon)}\,u_1(x) \,e^{{\bf d}^2(x,B_2)/4(t+\epsilon)} \,u_2(x) \, d\mu(x)\\
&\leq \bigl( \int_M e^{2g_1(x,t)} \,u_1^2(x) \,  d\mu(x) \bigr)^{1/2}\; \bigl( \int_M e^{2g_2(x,t)} \,u_2^2(x) \,  d\mu(x) \bigr)^{1/2}  \\
&\leq V_1^{1/2}\,V_2^{1/2} e^{-2\lambda_1(M)\, t}.
\end{split}
\end{equation}
On the other hand, the left side of the inequality can be rewritten as
\begin{equation*}
\begin{split}
\int_M & e^{{\bf d}^2(B_1,B_2)/8(t+\epsilon)}\,u_1(x) \,u_2(x) \,  d\mu(x) \\
& = e^{{\bf d}^2(B_1,B_2)/8(t+\epsilon)} \;\int_M \int_{B_1} \int_{B_2} H_f(x,y,t) \, H_f(x,z,t)  \, d\mu(y) \,  d\mu(z) \,  d\mu(x) \\
&= e^{{\bf d}^2(B_1,B_2)/8(t+\epsilon)} \; \int_{B_1} \int_{B_2} H_f(z,y,2t) \,  d\mu(y)  \,  d\mu(z)
\end{split}
\end{equation*}
from the semigroup property of the heat operator.

The lemma follows by combining this with \eref{L4e1} and sending $\epsilon\to 0$.
\end{proof}

We are now ready to prove the Gaussian upper bounds for the drifting heat kernel.

\begin{thm} \label{T2}
Let $(M^n,g,d\mu)$  be a metric measure space that satisfies the property \eref{VR} for all $x\in B_{x_o}(R)$.
Let $H_f(x,y,t)$ denote the minimal drifting heat kernel  defined on $M\times M\times (0,\infty)$
Then for any  $\eps >0$ there exist constants $c_1(n,\eps), c_2(n)$ such that
\[
H_f(x,y,t) \leq \frac{c_1 \;e^{c_2\, A(R)}}{ V_f^{1/2}(x,\sqrt{t}) \; V_f^{1/2}(y,\sqrt{t}) } e^{-\lambda_1 t} \cdot e^{- \frac{d^2(x,y)}{4(1+\eps)\, t}}
\]
for any $x,y \in B_{x_o}(R/2)$ and $0<t<R^2/4$,  where $\lambda_1=\lambda_1(M)$.
\end{thm}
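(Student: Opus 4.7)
The plan is to promote the integrated Gaussian estimate of Lemma~\ref{Lb4} to a pointwise bound by applying the parabolic mean value inequality of Lemma~\ref{L4b} in each space variable. Fix $x, y \in B_{x_o}(R/2)$ and $0 < t < R^2/4$, and set the scale $r = \eta\sqrt{t}$ for a small parameter $\eta \in (0, 1/\sqrt{2})$ to be chosen in terms of $\eps$. Since $t < R^2/4$ gives $r < R/4$, the balls $B_x(r)$ and $B_y(r)$ lie inside $B_{x_o}(R)$, and Lemma~\ref{L3} supplies the local Sobolev inequality needed for Lemma~\ref{L4b} on these balls.

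First I would apply Lemma~\ref{L4b} with $p = 1$ to the positive solution $(z,s) \mapsto H_f(z,y,s)$ of the drifting heat equation on the parabolic cylinder $B_x(r) \times (t - r^2, t)$, obtaining
\[
H_f(x,y,t) \;\leq\; \frac{C\,e^{C' A(R)}}{r^2\, V_f(x,r)} \int_{t - r^2}^{t}\!\int_{B_x(r)} H_f(z,y,s)\, d\mu(z)\, ds.
\]
For each fixed $(z,s)$, viewing $(w,s') \mapsto H_f(z,w,s')$ as a solution of the drifting heat equation in its second space-time variable, I would apply Lemma~\ref{L4b} again on $B_y(r) \times (s - r^2, s)$, substitute, and interchange the order of integration via Fubini. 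The condition $s - r^2 > 0$ throughout is guaranteed by $\eta < 1/\sqrt{2}$, and after reindexing the time variable, this yields
\[
H_f(x,y,t) \;\leq\; \frac{C^2\,e^{2 C' A(R)}}{r^2\, V_f(x,r)\, V_f(y,r)} \int_{t - 2 r^2}^{t}\!\int_{B_x(r)}\!\int_{B_y(r)} H_f(z,w,s')\, d\mu(w)\, d\mu(z)\, ds'.
\]

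The inner double integral would be controlled by Lemma~\ref{Lb4} with $B_1 = B_x(r)$, $B_2 = B_y(r)$, producing the factor $e^{-\lambda_1 s'} V_f(x,r)^{1/2} V_f(y,r)^{1/2} e^{-\mathbf{d}^2(B_1, B_2)/4 s'}$. I would bound each exponential at its maximum on $(t - 2 r^2, t)$ and integrate in time, which cancels one factor of $r^2$ in the prefactor. Volume comparison from Lemma~\ref{VC0} then replaces $V_f(x, r)^{-1/2}$ by $C\,\eta^{-(a+1)/2}\, e^{A(R)/2}\, V_f(x, \sqrt{t})^{-1/2}$ and similarly at $y$, with the $\eta$-dependent factor absorbed into the final constant $c_1(n, \eps)$. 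The triangle inequality gives $\mathbf{d}(B_x(r), B_y(r)) \geq d(x, y) - 2r$, and the elementary inequality $(a - b)^2 \geq a^2/(1+\eps) - b^2/\eps$ applied with $a = d(x, y)$ and $b = 2r$ extracts the desired $(1 + \eps)$ factor in the Gaussian exponent, the compensating term $e^{(2r)^2/4 \eps t} = e^{\eta^2/\eps}$ being a constant depending only on $\eta$ and $\eps$.

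The main obstacle I expect is the residual factor $e^{2 \lambda_1 r^2} = e^{2 \eta^2 \lambda_1 t}$ arising from bounding $e^{-\lambda_1 s'}$ by $e^{-\lambda_1 (t - 2 r^2)}$: since it depends on $\lambda_1 t$ rather than being an absolute constant, it erodes the $e^{-\lambda_1 t}$ factor. I would handle this by choosing $\eta$ small (making the geometric factor $\eta^{-(a+1)}$ in $c_1$ larger), so that the spectral loss becomes $e^{-(1 - 2\eta^2)\lambda_1 t}$ with $2\eta^2$ as small as desired; since $\eps$ is arbitrary in the statement, any such arbitrarily small spectral deterioration can be folded into the bound's tolerance by a corresponding slight enlargement of the Gaussian parameter $\eps$.
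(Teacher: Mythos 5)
Your strategy is exactly the one the paper intends: its proof of Theorem \ref{T2} is a one-line reference to \cite{Li2} and \cite{WuWu}*{Theorem 1.1}, and those arguments are precisely what you describe --- upgrade the integrated bound of Lemma \ref{Lb4} to a pointwise one by applying the parabolic mean value inequality once in each space variable at scale $r=\eta\sqrt{t}$, then use the volume comparison of Lemma \ref{VC0} and the elementary inequality $(a-b)^2\ge a^2/(1+\eps)-b^2/\eps$ to restore $V_f(\cdot,\sqrt{t})$ and the $4(1+\eps)$ in the Gaussian. Those steps are all carried out correctly (using Lemma \ref{L4b} with $p=1$ in place of Lemma \ref{L4} is immaterial, and Lemma \ref{L3} does furnish the Sobolev inequality on balls centered at any $x\in B_{x_o}(R)$, which is what your two applications of the mean value inequality require).

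The one step that does not close is your treatment of the spectral factor. The surplus $e^{-\lambda_1 s'}\le e^{-\lambda_1 t}\,e^{2\lambda_1 r^2}=e^{-\lambda_1 t}\,e^{2\eta^2\lambda_1 t}$ cannot be ``folded into the Gaussian parameter $\eps$'': take $x=y$, where $e^{-d^2(x,y)/4(1+\eps)t}=1$ for every $\eps$, while $e^{2\eta^2\lambda_1 t}$ still grows in $t$ whenever $\lambda_1>0$. What your argument honestly yields is the stated estimate with $e^{-\lambda_1 t}$ replaced by $e^{-(1-\delta)\lambda_1 t}$ for any prescribed $\delta>0$, with constants depending on $\delta$. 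In the classical $f=0$ setting the surplus is absorbed by combining Cheng's bound $\lambda_1\le(n-1)^2K/4$ with the factor $e^{CKt}$ already present in the Li--Yau estimate; here the analogous absorption into $e^{c_2 A(R)}$ would require an eigenvalue upper bound of the form $\lambda_1\le C(n)A(R)/R^2$, which does not follow from \eref{VR} alone. So you should either supply such a bound or weaken the spectral exponent to $(1-\delta)\lambda_1 t$. (This wrinkle is inherited from the cited sources and is harmless for the paper's purposes, since in Proposition \ref{IBP} the estimate is only used with $e^{-\lambda_1 t}\le 1$.)
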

The proof follows by combining Lemma \ref{Lb4} and the estimate of Lemma \ref{L4} (see \cite{Li2} and \cite{WuWu}*{Theorem 1.1})

\section{$L^1$ Liouville Property}

For the proof of the $L^1$ Liouville theorem we will need that the weighted manifold is stochastically complete, in other words that
\[
\int_M H_f(x,y,t)\, d\mu =1.
\]
Grigor{\cprime}yan shows   in \cite{Gr}*{Theorem 3.13} that a sufficient condition for stochastic completeness is that
\[
\int_1^\infty \frac{r}{\log V_f(x_o,r)} \; dr =\infty
\]
for some point $x_o\in M$. In other words, it requires that the weighted volume of the manifold grows at most exponentially quadratic in $r$. We summarize this  below
\begin{lem} \label{L5}
Let $(M^n,g,d\mu)$  be a smooth metric measure space   on which the heat kernel, $H_f(x,y,t)$, is well defined. Suppose that
\[
V_f(x_o,R) \leq C \, R^\alpha \, e^{c R^2}
\]
with respect to a fixed point $x_o$, for all $R>1$ and uniform constants $C, c, \alpha$. Then $(M^n,g,d\mu)$ is stochastically complete.
\end{lem}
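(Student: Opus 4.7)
The plan is to apply directly the Grigor{\cprime}yan integral criterion that the excerpt recalls immediately before the lemma, so the whole argument reduces to verifying
\[
\int_1^\infty \frac{r}{\log V_f(x_o, r)}\, dr = \infty.
\]
Taking logarithms of the weighted volume hypothesis yields $\log V_f(x_o, r)\leq \log C + \alpha\log r + c r^2$ for every $r\geq 1$. Absorbing the lower-order terms into the quadratic one, I would produce a constant $C'>0$ depending only on $C,\alpha,c$ such that $\log V_f(x_o, r)\leq C'(1+r^2)$ for all $r\geq 1$; this is immediate since $\log r = o(r^2)$ at infinity, together with an adjustment of the constant on a bounded interval.

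Substituting this bound yields
\[
\int_1^\infty \frac{r}{\log V_f(x_o, r)}\, dr \;\geq\; \frac{1}{C'}\int_1^\infty \frac{r}{1+r^2}\, dr \;=\; \infty,
\]
and Grigor{\cprime}yan's theorem then gives $\int_M H_f(x,y,t)\,d\mu = 1$, which is exactly stochastic completeness.

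I do not foresee a genuine obstacle: the content of the lemma is a calibration remark recognising that $e^{cR^2}$ sits precisely at the borderline volume growth at which the Grigor{\cprime}yan test barely passes, so the proof is essentially a one-line consequence of the cited result. The only mild subtlety worth mentioning is that $\log V_f(x_o, r)$ could a priori be negative for small $r$ (if the weighted volume there happens to be less than one), but this only affects a bounded subinterval and has no bearing on the divergence of the improper integral, which is controlled entirely by the large-$r$ tail.
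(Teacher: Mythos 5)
Your proof is correct and matches the paper's approach: the lemma is stated as a direct consequence of Grigor{\cprime}yan's integral criterion, and the verification that $\log V_f(x_o,r)\leq C'(1+r^2)$ forces divergence of $\int_1^\infty r/\log V_f(x_o,r)\,dr$ is exactly the intended (and essentially only) content. The remark about possible negativity of $\log V_f$ on a bounded interval is a reasonable extra care that does not change the argument.
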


The above result essentially dictates what the assumptions on the volume growth and  ${\rm Ric}_ f$ should be on our manifold; they indicate that in both the volume and heat kernel estimates as well the mean value inequalities of the previous section, the exponential term in $R$ (in other words $A(R)$) should be at most quadratic in $R$.   As we will see in the following section, the assumption in the theorem can  be achieved under a combination of conditions for ${\rm Ric}_ f$ and $f$.

The proof of Theorem \ref{LiouT} will be based on the arguments of \cite{Li1} and as a result require the following integration by parts formula
\begin{prop} \label{IBP}
Under the assumptions of  Theorem \ref{LiouT},  if $v$ is a nonnegative $f$-subharmonic function in $L^1$ then
\[
\int_M \Delta_{f,y} H_f(x,y,t)\, v(y)\; d\mu(y) =  \int_M  H_f(x,y,t)\,\Delta_{f,y} (y)\; d\mu(y)
\]
\end{prop}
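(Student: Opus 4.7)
The strategy is to establish the identity for small $t$ via a cutoff plus Green's formula argument, then extend it to all $t > 0$ by the semigroup property. Fix $x \in M$ and $t > 0$, and choose a smooth cutoff $\phi_R$ with $\phi_R \equiv 1$ on $B_{x_o}(R)$, $\operatorname{supp} \phi_R \subset B_{x_o}(2R)$, and $|\nabla \phi_R| \leq 2/R$. Since $\phi_R v$ is compactly supported and $H_f(x,\cdot,t)$ is smooth, two integrations by parts give
\[
\int_M \phi_R\, \Delta_{f,y} H_f \cdot v\, d\mu - \int_M \phi_R\, H_f\, \Delta_{f,y} v\, d\mu = \int_M (H_f \nabla v - v \nabla H_f) \cdot \nabla \phi_R\, d\mu,
\]
and the right side is supported in the annulus $A_R = B_{x_o}(2R) \setminus B_{x_o}(R)$.

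Passing to the limit $R \to \infty$ on the left is routine. The second integral converges to $\int H_f \Delta_f v\, d\mu$ by monotone convergence, using $\Delta_f v \geq 0$ and $\phi_R \nearrow 1$. The first converges to $\int \Delta_{f,y} H_f \cdot v\, d\mu$ by dominated convergence: using $\Delta_{f,y} H_f = \partial_t H_f$, the Gaussian upper bound of Theorem \ref{T2}, and standard interior parabolic estimates, one obtains a pointwise control of $|\partial_t H_f(x,y,t)|$ (for fixed $x,t$) by an $L^1(d\mu)$ function of $y$, which pairs with $v \in L^1$.

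The main obstacle is showing the annular integral vanishes. Using $|\nabla \phi_R| \leq C/R$, it suffices to estimate $\int_{A_R} (H_f |\nabla v| + v |\nabla H_f|)\, d\mu$. For the first summand I would combine three ingredients: (i) the Caccioppoli inequality $\int \psi^2 |\nabla v|^2\, d\mu \leq 4 \int |\nabla \psi|^2 v^2\, d\mu$, obtained by testing $\Delta_f v \geq 0$ against $\psi^2 v$; (ii) the pointwise mean value bound $\sup_{B_{x_o}(CR)} v \leq C R^{-2} V_f(x_o,CR)^{-1} e^{c R^2}\|v\|_1$ from Corollary \ref{MVI}; and (iii) the Gaussian bound $\sup_{y \in A_R} H_f(x,y,t) \leq C e^{c b R^2 - R^2/(4(1+\eps) t)}$ from Theorem \ref{T2} applied with outer radius $4R$. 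The second summand is treated analogously after invoking the standard parabolic gradient estimate for $H_f$. Since $A(R) = bR^2$, for $t$ smaller than some threshold $t_0 > 0$ depending only on $b$, $\eps$, and geometric constants, the negative Gaussian exponent $-R^2/(C t)$ dominates all of the $e^{c R^2}$ factors and the annular integral vanishes, proving the identity for $t \in (0, t_0)$.

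To extend the formula to arbitrary $t > 0$, set $w(x,s) = \int_M H_f(x,y,s) v(y)\, d\mu(y)$. For $s < t_0$, the small-time identity applied to $v$ gives $\partial_s w(x,s) = \int H_f \Delta_f v\, d\mu \geq 0$, and since $w$ solves the drifting heat equation, $\Delta_{f,x} w = \partial_s w \geq 0$, so $w(\cdot,s)$ is nonnegative and $f$-subharmonic. Stochastic completeness (Lemma \ref{L5}) together with the symmetry of $H_f$ yields $\|w(\cdot,s)\|_1 = \|v\|_1$, so $w(\cdot,s)$ itself satisfies the hypotheses of the proposition. Applying the small-time identity to $w(\cdot,s)$, together with the semigroup identity $\int H_f(x,y,t+s) v(y)\, d\mu = \int H_f(x,z,t) w(z,s)\, d\mu(z)$ and Fubini, extends the conclusion to times in $(0, 2 t_0)$; iterating then covers all $t > 0$.
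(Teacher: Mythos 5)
Your strategy is essentially the paper's: a cutoff supported on a large annulus, the mean value inequality of Corollary \ref{MVI} together with a Caccioppoli inequality to control the terms involving $v$ and $\nabla v$, the Gaussian bound of Theorem \ref{T2} to beat the $e^{cR^2}$ factors for $t$ below a threshold, and the semigroup property to pass to all $t>0$ (your iteration via $w(\cdot,s)$ is just a more explicit version of the paper's closing remark). The one step that would fail as written is your treatment of $\int_{A_R} v\,|\nabla H_f|\,d\mu$: this term is \emph{not} handled ``analogously'' to the first summand. A pointwise ``standard parabolic gradient estimate'' for $H_f$ (Li--Yau or Hamilton type) is not available here, since such estimates require a lower bound on ${\rm Ric}_f$, whereas the hypotheses of Theorem \ref{LiouT} provide only the volume form condition \eref{VR}. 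If instead you mean the integrated Caccioppoli estimate, note that it does not transfer directly either: for the subharmonic $v$ you could discard the cross term because $\int \phi^2 v\,\Delta_f v\,d\mu\ge 0$, but $\Delta_f H_f=\partial_t H_f$ has no sign, so the cross term $\int \phi^2 H_f\,\Delta_f H_f\,d\mu$ must be estimated. The paper does this by Cauchy--Schwarz combined with the bound $\int_M(\Delta_f H_f)^2\,d\mu\le \hat C\,t^{-2}H_f(x,x,t)$, obtained from the eigenfunction expansion of the Dirichlet heat kernels on a compact exhaustion as in \cite{CLY}; you need to supply this ingredient (or an equivalent one) for your annular estimate to close. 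With that addition, the rest of your argument goes through.
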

\begin{proof}
We include an outline of the proof for the sake of completion. Stokes' Theorem on $B_{x_o}(R)$ implies that
\begin{equation} \label{IBPe0}
\begin{split}
&\bigl| \int_{B_{x_o}(R)} \Delta_{f,y} H_f(x,y,t)\, v(y)\; d\mu(y) - \int_{B_{x_o}(R)} H_f(x,y,t)\,\Delta_{f,y} v(y)\; d\mu(y)\;\bigr| \\
&\leq  \int_{\p B_{x_o}(R)} |\n H_f(x,y,t)|\, v(y)\; d\sigma_\mu (y) - \int_{\p B_{x_o}(R)} H_f(x,y,t)\,|\n v(y)|\; d\sigma_\mu (y)
\end{split}
\end{equation}
where $d\sigma_\mu$ is the induced metric on $\p B_{x_o}(R)$ by the weighted measure $d\mu$.

The goal is to show that the right side of the above inequality vanishes as $R\to \infty$. To this end, we take a cut-off function $\phi(s): \mathbb{R}^+\to \mathbb{R}^+$  such that
\begin{equation*}
\begin{split}
&\phi(s)=1 \ \ {\rm for} \ \ s\in[R, R+1] \\
&\phi(s)=0 \ \ {\rm for} \ \ s\in[0, R-1] \cup  [R+2,\infty)\\
& 0\leq \phi(s) \leq 1 \ \ {\rm and} \ \ |\phi'(s)|\leq \sqrt{3}.
\end{split}
\end{equation*}

The mean value inequality of Corollary \ref{MVI} implies that there exist uniform constants $C_9(n,a), C_{10}(n)$ such that all $R>1$
\begin{equation} \label{MVI2}
\sup_{B_{x_o}(\frac R2)} v \leq  C_{9} \frac{e^{C_{10} A(R)}}{R^2\, V_f(x_o,R) }\; \|v \|_1.
\end{equation}
Since $v$ is nonnegative and $f$-subharmonic
\begin{align*}
0&\leq \int_M \phi^2 v\, \Delta_f v\; d\mu =- \int_M \<\n (\phi^2 v) , \n v\>\; d\mu\\
& \leq 2\int_M |\n \phi|^2 v^2  \; d\mu  - \frac 12\int_M \phi^2 \,|\n v|^2   \; d\mu
\end{align*}
by the generalized Cauchy's inequality. As a result,
\begin{equation*}
\begin{split}
\int_{B_{x_o}(R+1)\setminus B_{x_o}(R)} |\n v|^2 \, d\mu &\leq 4 \int_M |\n \phi|^2 v^2  \; d\mu  \leq  12 \int_{B_{x_o}(R+2)} v^2    \; d\mu\\
&\leq \sup_{B_{x_o}(R+2)} v \cdot \|v\|_{1} \leq \frac{C\, e^{C'\, R^2}}{V_f(x_o, 2R+4)}  \|v\|_{1}^2
\end{split}
\end{equation*}
by   \eref{MVI2}, where now $C=C(n, a)$ and $C'=C'(n,b)$. Combining with the Schwartz inequality and the fact that $B_{x_o}(R+1)\setminus B_{x_o}(R)\subset B_{x_o}(2R+4)$ we get,
\begin{equation} \label{IBPe4}
\begin{split}
\int_{B_{x_o}(R+1)\setminus B_{x_o}(R)} |\n v| \, d\mu &  \leq  V_f^{1/2}(x_o, 2R+4) \, (\int_{B_{x_o}(R+1)\setminus B_{x_o}(R)} |\n v|^2 \, d\mu\; )^{1/2}\\
&\leq C\, e^{C'\,R^2}  \|v\|_{1}.
\end{split}
\end{equation}

For all $x,y \in B_{x_o}(R+1)$ and $0<t<R^2/4$ the volume comparison property of Lemma \ref{VC0} implies that
\[
V_f(x,\sqrt{t})\leq V_f(y,\sqrt{t}+d(x,y)) \leq (\frac{d(x,y)}{\sqrt{t}}+1)^{a+1}\; e^{4b R^2}  \; V_f(y,\sqrt{t}).
\]
Therefore, for any $x\in B_{x_o}(R)$ and $0<t<R^2/4$ the heat kernel estimate gives us
\begin{equation*}
\begin{split}
&\int_{B_{x_o}(R+1)\setminus B_{x_o}(R)} H_f(x,y,t) \,|\n v|(y) \, d\mu(y) \\&  \leq  \bigl(  \sup_{y \in B_{x_o}(R+1)\setminus B_{x_o}(R)}   H_f(x,y,t) \;\bigr) \,\int_{B_{x_o}(R+1)\setminus B_{x_o}(R)} |\n v| \, d\mu\\
&\leq C\; V_f^{-1}(x,\sqrt{t}) \; \|v\|_{1} \; \sup_{y \in B_{x_o}(R+1)\setminus B_{x_o}(R)} \{(\frac{d(x,y)}{\sqrt{t}}+1)^{a+1} \; e^{\tilde{C} \,R^2 - \frac{d^2(x,y)}{5t}}\; \}
\end{split}
\end{equation*}
by \eref{IBPe4}, for $C=C(n,a)$ and $\tilde C= \tilde C(n,b)$.

For $x,y$ as above, $d(x,y)\leq R+1+d(x_o,x)$ and $d(x,y)\geq R-d(x_o,x)$. Therefore,
\begin{equation} \label{IBPe1}
\begin{split}
&\int_{B_{x_o}(R+1)\setminus B_{x_o}(R)} H_f(x,y,t) \,|\n v|(y) \, d\mu(y) \\
&\leq C\; V_f^{-1}(x,\sqrt{t}) \; \|v\|_{1} \;  \bigl(\frac{R+1+d(x_o,x)}{\sqrt{t}}+1\bigr)^{a+1} \; e^{\tilde{C}\,R^2 - \frac{(R-d(x_o,x))^2}{5t})}\;  .
\end{split}
\end{equation}
For $T>0$ sufficiently small and for all  $t\in(0,T)$ there exists a constant $\beta>0$  and uniform constants $c, \hat c$ such that
\[
\tilde{C}\,R^2 - \frac{(R-d(x_o,x))^2}{5t}\leq -\beta \, R^2 + c \, \frac{d^2(x_o,x)}{t} + \hat c.
\]
As a result,  as $R\to \infty$ the right side of \eref{IBPe1} tends to zero for all $t\in(0,T)$, $x\in M$.

In a similar manner we can show that
\begin{equation} \label{IBPe2}
\int_{B_{x_o}(R+1)\setminus B_{x_o}(R)} |\n H_f(x,y,t)| \, v(y) \, d\mu(y)
\end{equation}
tends to zero as $R\to \infty$. Using the definition of $\phi$, integration by parts and the generalized Cauchy's inequality we can first estimate
\begin{equation} \label{IBPe3}
\begin{split}
\int_{B_{x_o}(R+1)\setminus B_{x_o}(R)} & |\n H_f |^2 \, d\mu \leq   \int_M \phi^2 |\n H_f |^2 \, d\mu \\
&\leq  4\int_M    |\n \phi |^2 \,H_f^2  d\mu  -2\int_M \phi^2    H_f  \Delta_{f} H_f  \, d\mu \\
&\leq  12 \int_{B_{x_o}(R+2)\setminus B_{x_o}(R-1)}   (H_f)^2 \, d\mu \\
& \qquad +2\bigl( \int_{B_{x_o}(R+2)\setminus B_{x_o}(R-1)}   (H_f)^2 \, d\mu \bigr)^{1/2} \bigl( \int_M   (\Delta_f H_f)^2 \, d\mu \bigr)^{1/2}.
\end{split}
\end{equation}
The stochastic completeness of the weighted manifold and the Gaussian upper bound for $H_f$ imply that  for $x\in B_{x_o} (R)$
\begin{align*}
 &\int_{B_{x_o}(R+2)\setminus B_{x_o}(R-1)}   H_f(x,y,t)^2 \, d\mu(y)  \leq \sup_{y\in B_{x_o}(R+2)\setminus B_{x_o}(R-1)} H_f(x,y,t)\\
 &\leq C\, V_f^{-1}(x,\sqrt{t}) \;    \bigl(\frac{R+2+d(x_o,x)}{\sqrt{t}}+1\bigr)^{a+1} \; e^{\tilde{C}\,R^2 - \frac{(R-1-d(x_o,x))^2}{5t})}\;  .
\end{align*}
as in \eref{IBPe1}. Using the eigenfunction expansion for the heat kernel on compact domains, we can show as in \cite{CLY}*{Lemma 7} that for some uniform constant $\hat C$
\[
\int_M   (\Delta_f H_f)^2 \, d\mu \leq \frac{\hat C}{t^2} H_f(x,x,t).
\]

We now substitute the above two upper bounds into the right side of \eref{IBPe3} to get
\begin{equation*}
\begin{split}
\int_{B_{x_o}(R+1)\setminus B_{x_o}(R)} |\n H_f |^2 \, d\mu & \leq  \; \hat C \bigl[ V_f^{-1}(x,\sqrt{t}) +  \frac 1t V_f^{-1/2}(x,\sqrt{t}) \,H_f^{1/2}(x,x,t) \bigr]\;  \\
&   \cdot  \bigl(\frac{R+2+d(x_o,x)}{\sqrt{t}}+1\bigr)^{a+1} \; e^{\tilde{C}\,R^2 - \frac{(R-1-d(x_o,x))^2}{10t})}\;  .
\end{split}
\end{equation*}
From the above estimate and the mean value inequality of Corollary \ref{MVI} we see that
\begin{equation*}
\begin{split}
&\int_{B_{x_o}(R+1)\setminus B_{x_o}(R)} |\n H_f(x,y,t) | \, v(y)  \, d\mu(y) \\
&\leq \sup_{y\in B_{x_o}(R+1)\setminus B_{x_o}(R)} v(y) \cdot \int_{B_{x_o}(R+1)\setminus B_{x_o}(R)} |\n H_f(x,y,t) |   \, d\mu(y) \\
&\leq  \frac{e^{C(1+R^2)}}{V_f(x_o, 2R+4)}  \|v\|_{1}^2 \; \cdot V_f^{1/2}(x_o, R+1)\bigl(\int_{B_{x_o}(R+1)\setminus B_{x_o}(R)} |\n H_f |^2 \, d\mu \bigr)^{1/2} \\
&\leq  \; C \; V_f^{-1/2}(x_o, 2R+4)\bigl[ V_f^{-1}(x,\sqrt{t}) +  \frac 1t V_f^{-1/2}(x,\sqrt{t}) \,H_f^{1/2}(x,x,t) \bigr]\;  \\
&  \quad  \cdot  \bigl(\frac{R+2+d(x_o,x)}{\sqrt{t}}+1\bigr)^{a+1} \; e^{\tilde{C}\,R^2 - \frac{(R-1-d(x_o,x))^2}{10t})}\; .
\end{split}
\end{equation*}
Similarly to the argument for \eref{IBPe1} we can find a $T>0$ sufficiently small such that for all  $t\in(0,T)$ the right side of this inequality tends to zero as $R\to \infty$.

Now using the mean value theorem for integrals, we can show that the right side of \eref{IBPe0} tends to zero as $R\to \infty$ for all $t<T$. That it tends to zero for all $t>0$ is a consequence of the semigroup property of the heat kernel (see for example \cite{WuWu}*{Theorem 4.3}
\end{proof}

We are now ready to prove Theorem \ref{LiouT}.

\begin{proof}[Proof of Theorem \ref{LiouT}] As in \cite{Li1} we use the nonnegative $f$-subharmonic function $v$ to construct a solution to the heat equation
\[
v(t,x)=\int_M H_f(x,y,t) \, v(y)\, d\mu(y).
\]
We observe that
\[
\frac{\p}{\p t} v(t,x) =  \int_M (\Delta_{f,y} H_f(x,y,t) ) \, v(y)\, d\mu(y) = \int_M H_f(x,y,t)   \,\Delta_{f,y}  v(y)\, \, d\mu(y) \geq 0
\]
by Proposition \ref{IBP} and the subharmonicity of $v$. Therefore $v(t,x)$ is nondecreasing in $t$. At the same time,
\[
\int_M  v(t,x) d\mu(x)= \int_M  \int_M H_f(x,y,t) \, v(y)\, d\mu(y)\,  d\mu(x) =  \int_M  \, v(y)\, d\mu(y)
\]
by the stochastic completeness of the weighted manifold. Given that $v(t,x)$ is nondecreasing in $t$, we conclude that $v(t,x)=v(x)$ for all $x,$ and as a result $(\p /\p t) v(t,x) = \Delta_{f,x} v(t,x) =0$, in other words $v(x)$ must be a nonnegative $f$-harmonic function.

To show that $v$ is constant we consider the function $v_\alpha(x):=\min\{v(x), \alpha\}$ for some positive constant $\alpha$. $v_\alpha$ is superharmonic, since $v$ is harmonic,  and it satisfies the properties
\[
0\leq v_\alpha(x) \leq v(x), \ \ \   |\n v_\alpha(x)| \leq |\n v(x)|.
\]
As a result, $v_\alpha$ is also in $L^1.$   Furthermore, it can easily be seen that $v_\alpha$  satisfies the mean value inequality \eref{MVI2}: if $v_\alpha = \alpha$ then \eref{MVI2} clearly holds, and if not then
\[
\sup_{B_{x_o}( \frac R2)} v_\alpha \leq \sup_{B_{x_o}(\frac R2 )} v \leq  C_{9} \frac{e^{C_{10} A(R)}}{R^2\, V_f(x_o,R) }\; \|v \|_1 =  C'_{9} \frac{e^{C_{10} A(R)}}{R^2\, V_f(x_o,R) }\; \|v_\alpha \|_1
\]
for some constant $C'_9$ independent of $R$.  Using the superharmonicity of $v_\alpha$ we can also obtain the gradient estimate of \eref{IBPe4}. Applying a similar argument as in the proof of Proposition \ref{IBP},   we can integrate by parts to show that
\[
\frac{\p}{\p t} \int_M H_f(x,y,t) \, v_{\alpha}(y)\, d\mu(y) =  \int_M H_f(x,y,t)   \,\Delta_{f,y}  v_{\alpha}(y)\, \, d\mu(y) \leq 0
\]
and obtain by the stochastic completeness of the manifold that $v_{\alpha}$ must be $f$-harmonic.

From the regularity of harmonic functions, it follows that $v_\alpha$ must satisfy   either $v_\alpha\equiv v$ or $v_\alpha\equiv \alpha.$ Since $\alpha$ was arbitrary and $v$ is nonnegative we must have that $v$ is a constant function.
\end{proof}

\section{Sufficient Conditions}

In this section we will give sufficient conditions on ${\rm Ric}_f$ and $f$ so that the manifold satisfies the assumptions of Theorem \ref{LiouT}. As above we fix a point $x_o\in M$ and set $r(x)=d(x,x_o)$.

\begin{prop} The weighted manifold satisfies the $L^1$ Liouville property whenever one of the following holds:

(1) ${\rm Ric}_ f \geq - K(1+r(x)^2)$ and $|f(x)|\leq K_o$ for some uniform constants $K, K_o$.

(2) ${\rm Ric}_ f \geq 0$ and  $|f(x)| \leq c (1+r(x)^2)$ for some uniform constant $c$.

(3)  ${\rm Ric}_ f \geq - K(1+r(x)^2)$ and $|\n f| \leq c(1+r(x))$ for some uniform constant $c$.
\end{prop}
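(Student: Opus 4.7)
The plan is to apply Theorem \ref{LiouT} by verifying, in each of the three cases, that the volume form hypothesis \eref{VR} holds with $A(R)=bR^2$ for some uniform constant $b$. The common starting point is the identity
\[
\log \frac{J_f(x,r_2,\xi)}{J_f(x,r_1,\xi)} = \int_{r_1}^{r_2} \Delta_f r(x, \exp_x(s\xi))\, ds,
\]
valid along radial geodesics up to the cut locus, together with the Riccati inequality obtained by applying the weighted Bochner formula \eref{fBochner} to the distance function $r$ (using $|\nabla r|=1$ and $|\nabla^2 r|^2 \geq (\Delta r)^2/(n-1)$):
\[
(\Delta_f r)' \leq -\frac{(\Delta r)^2}{n-1} - {\rm Ric}_f(\partial_r, \partial_r).
\]
So the task in each case is to produce a pointwise estimate of the form $\Delta_f r \leq \frac{a}{r} + b(1+R)$ on $B_{x_o}(R)$ or a suitable integrated substitute.

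Case (3) is the most direct. Writing $\Delta r = \Delta_f r + \partial_r f$ and using the linear bound $|\nabla f|\leq c(1+r)$, one can replace $\Delta r$ by $\Delta_f r$ in the Riccati inequality up to an error of order $1+R$. Combined with ${\rm Ric}_f \geq -K(1+r^2)$, a standard Bakry-Qian style comparison argument then yields the pointwise bound $\Delta_f r \leq \frac{n-1}{r} + b(1+R)$ on $B_{x_o}(R)$. Integrating over $[r_1,r_2]\subset(0,R)$ gives \eref{VR} with $a=n-1$ and $A(R) = bR^2$.

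Case (1) is reduced to an unweighted comparison. Since $|f|\leq K_o$,
\[
\frac{J_f(x,r_2,\xi)}{J_f(x,r_1,\xi)} \leq e^{2K_o}\,\frac{J(x,r_2,\xi)}{J(x,r_1,\xi)},
\]
so it suffices to control the unweighted form. The Wei-Wylie comparison under ${\rm Ric}_f \geq -K(1+r^2)$ together with $|f|\leq K_o$ provides a bound $\Delta r \leq \frac{n-1}{r}+b(1+R)$ on $B_{x_o}(R)$, the boundedness of $f$ effectively shifting the dimension by a bounded amount in the model ODE. Integrating and absorbing the $e^{2K_o}$ factor yields \eref{VR} with $A(R)=bR^2$.

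Case (2) is the main obstacle, because ${\rm Ric}_f\geq 0$ alone provides no pointwise control on either $\Delta r$ or $\Delta_f r$: the Hessian of $f$ could be arbitrarily negative, so the Riccati argument cannot be implemented in the pointwise form used in cases (1) and (3). The strategy is to work with $\Delta_f r$ directly and split the contributions: the weighted Bakry-\'Emery comparison under ${\rm Ric}_f\geq 0$ yields the polynomial factor $(r_2/r_1)^a$, while the quadratic bound $|f|\leq c(1+r^2)$ contributes a factor at most $e^{2c(1+R^2)}$ to $J_f(r_2)/J_f(r_1)$ through the $e^{-f}$ appearing in the definition of $J_f$. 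The crux is to make precise the cancellation between the (potentially large) Hessian term hidden inside ${\rm Ric}_f={\rm Ric}+\nabla^2 f\geq 0$ and the explicit $\partial_r f$ appearing in $\Delta_f r = \Delta r - \partial_r f$; this is accomplished by integrating along the radial direction and using $|f|\leq c(1+r^2)$ to bound the resulting boundary term, after which the total contribution to $A(R)$ is again $bR^2$.
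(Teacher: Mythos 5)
Your overall strategy is the paper's: verify the volume-form condition \eqref{VR} with $A(R)=bR^2$ in each case and then invoke Theorem \ref{LiouT}. Your case (3) --- a pointwise bound $\Delta_f r\le \frac{n-1}{r}+b(1+R)$ from the Wei--Wylie comparison under the linear gradient bound, followed by radial integration --- is exactly the argument the paper writes out. For cases (1) and (2) the paper simply cites Wei--Wylie (Theorem 1.1) and Munteanu--Wang (Lemma 2.1), whose mechanisms you are attempting to reproduce.

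Your reductions in (1) and (2), however, contain steps that fail as stated. In case (1), after factoring out $e^{2K_o}$ you must control the unweighted ratio $J(x,r_2,\xi)/J(x,r_1,\xi)$, i.e.\ $\int_{r_1}^{r_2}\Delta r\,ds$; but $|f|\le K_o$ gives no control on $\partial_r f$ or on $\nabla^2 f$, so neither $\Delta r=\Delta_f r+\partial_r f$ nor ${\rm Ric}={\rm Ric}_f-\nabla^2 f$ admits a pointwise bound, and the claimed estimate $\Delta r\le\frac{n-1}{r}+b(1+R)$ does not follow. What Wei--Wylie actually prove is an estimate for the weighted mean curvature $\Delta_f r$, obtained by integrating by parts so that $\partial_r f$ is traded for $f$ itself (this is what shifts the model dimension to $n+4K_o$); one should therefore estimate $J_f(r_2)/J_f(r_1)$ directly rather than detour through $J$. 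The same objection applies to the first half of your case (2): ${\rm Ric}_f\ge0$ alone yields no polynomial bound for either the $J_f$-ratio or the $J$-ratio, so the proposed split into a factor $(r_2/r_1)^a$ plus a factor $e^{2c(1+R^2)}$ coming from $e^{-f}$ is not a valid decomposition. The argument that works is the one you gesture at in your final sentence: multiply the Riccati inequality by $t^2$ and integrate twice, so that the boundary term $r^2\partial_r f(r)$ cancels against the $-\partial_r f$ in $\Delta_f r$ and the surviving terms, of the form $-\tfrac{2f(r)}{r}+\tfrac{2}{r^2}\int_0^r f$, are controlled by $|f|\le c(1+r^2)$; this is precisely Munteanu--Wang's Lemma 2.1 and yields \eqref{VR} with $A(R)$ quadratic in $R$. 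With those two steps replaced by the actual content of the cited comparison lemmas, the proof goes through.
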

\begin{proof}
For (1) the proof of Theorem 1.1 in \cite{WW} shows that the manifold satisfies property \eref{VR} for all $R>1$ with $a=a(n, K_o)$ and $A(R)= b R^2$ for some  $b=b(n,K)$.

For (2) Lemma 2.1 in \cite{MuW} shows that the manifold satisfies the property \eref{VR} for all $R>1$ with $a=n-1$ and $A(R)= 9 c\, (1+R^2).$

For (3) we go back to  the Laplacian comparison theorem in \cite{WW}.  Let $x,y \in B_{x_o}(R)$ and $\gamma$ be the minimizing geodesic from $x$ to $y$ such that $\gamma(0)=x$ and $\gamma(r)=y$. Whenever ${\rm Ric}_ f \geq -(n-1) K(R)$ on $B_{x_o}(3R)$, the  Bochner formula gives us
\[
\Delta_f r(x,y) \leq (n-1)\frac{{\rm sn}'_K (r)}{{\rm sn}_K (r)} - \frac{1}{{\rm sn}_K^2(r)} \int_0^r ({\rm sn}^2_K)' (t)\, f'(t) \; dt
\]
where ${\rm sn}_K$ is the solution to the Riccati equation ${\rm sn''}_K-K\,{\rm sn}_K =0$ with ${\rm sn}_K(0)=0$ and ${\rm sn'}_K(0)=1$, and $f'(t)=\<\n f, \n r \>$.  Observe that ${\rm sn}_K (r) =\sinh (\sqrt{K} \, r)$ and that $({\rm sn}^2_K)'\geq 0$.

If $f'(t)\geq -c(R)$, then
\[
\Delta_f r(x,y) \leq (n-1)\frac{{\rm sn}'_K (r)}{{\rm sn}_K (r)}  +  c(R) \leq \frac{n-1}{r} +\sqrt{K(R)} + c(R)
\]
and integrating this inequality from $r_1$ to $r_2$ for $0<r_1<r_2<R$ we get
\[
\frac{J_f(x,r_2,\xi)}{J_f(x,r_1,\xi)} \leq \bigl( \frac {r_2}{r_1} \bigr)^{n-1}\, e^{(\sqrt{K(R)} + c(R))(r_2-r_1)}\leq  \bigl( \frac {r_2}{r_1} \bigr)^{n-1}\, e^{(\sqrt{K(R)} + c(R))R}.
\]

The proposition follows in each case  by Theorem \ref{LiouT}
\end{proof}

Observe that the last case reduces to assuming ${\rm Ric}_ f^q \geq -C(1+r(x)^2)$ which was previously studied by X-D. Li \cite{XDLi1}.

\section{Examples}

The well-known examples in \cite{LiSch}, demonstrate that for $\alpha>2$ there exist Riemannian manifolds with ${\rm Ric}\geq -c(1+r^\alpha)$  that do not satisfy the $L^1$ Liouville property. Below we provide examples at the other end of the spectrum, in the sense that if ${\rm Ric}_f\geq 0$ then we cannot let $f$ have growth higher than quadratic.

\begin{example}
{\rm
Consider the euclidian plane $\mathbb{R}^2$ with metric $g=dr^2+ r^2 d\theta^2$ in polar coordinates, and with weighted measure $d\mu = e^{-f} dv$ where $f=f(r)$ is a function that only depends on the polar distance $r$. For simplicity we take $f(r)= A r^\alpha$ with $\alpha\geq 1$ and $A\geq 0$.

\smallskip

Then $  {\rm Ric}_ f = {\rm Ric} + {\rm Hess} f \geq 0$, and  the $f$-Laplacian of a function $u$ is given by
\[
\Delta_f u= u_{rr} + (\frac 1r - f_r) u_r+ \frac{1}{r^2} u_{\theta \theta}.
\]

Observe that if $f$ is constant, then $\log r$ is a harmonic function which does not belong to $L^1$, noting that the singularity at zero is not what affects the integrability.

The function
\[
u(r)=\int_1^r \frac 1t \, e^{f(t)} \, dt
\]
is an $f$-harmonic function whose $L^1$ norm is given by
\begin{align*}
\|u\|_{L^1}&= 2\pi \int_0^\infty \bigl|\int_1^r \frac 1t \, e^{f(t)} \, dt\bigr|\; r\,e^{-f(r)} dr\\
&=2\pi \int_0^1 \int_r^1 \frac rt \, e^{f(t)-f(r)} \, dt \,  dr + 2\pi \int_1^\infty  \int_1^r \frac rt \, e^{f(t)-f(r)} \, dt\, dr.
\end{align*}
The first integral above is always clearly bounded since the exponential factor is bounded and $\int_0^1 \int_r^1 \frac rt \, dt \,  dr = \int_0^1 r\log r \, dr <C$.  Changing the order of integration in the second integral gives
\[
2\pi \int_1^\infty  \int_t^\infty \frac rt \, e^{f(t)-f(r)} \, dr\, dt \quad
\]
and when $\alpha=2$ this integral diverges.

However, when $\alpha>2$, we rewrite $r=r^{\alpha-1} r^{2-\alpha}\leq r^{\alpha-1} t^{2-\alpha},$ since $r\geq t$ on our area of integration, then
\begin{align*}
&2\pi \int_1^\infty  \int_t^\infty \frac rt \, e^{f(t)-f(r)} \, dr\, dt\\&\leq C \lim_{a,b\to\infty} \int_1^a  \int_t^b r^{\alpha-1} t^{1-\alpha} \, e^{At^{\alpha}-Ar^{\alpha}} \, dr\, dt \\
&= C' \lim_{a,b\to\infty} \int_1^a  -\, t^{1-\alpha} \, e^{At^{\alpha}-Ar^{\alpha}} \bigr|_{r=t}^{r=b}\,  dt\\
&=C' \lim_{a,b\to\infty} \int_1^a   t^{1-\alpha} \,[1- e^{At^{\alpha}-Ab^{\alpha}} ]\,  dt\\
&\leq C' \lim_{a,b\to\infty} \int_1^a   t^{1-\alpha} \, dt <\infty.
\end{align*}
This simple example shows that it is not enough to assume ${\rm Ric}_f\geq 0$ without any control on $f$. In particular, if $f$ has order higher than quadratic, then there could exist an  $f$-harmonic function with bounded $L^1$ norm.
}

\end{example}

\begin{example} {\rm Let $S=\mathbb{R}\times S^1$ be the cylinder with flat metric $g=dx^2 + d\theta^2$. Consider the smooth metric measure space $(S,g, e^{-f} dv)$ where  $f=f(x)$ is a smooth concave function in $x$ such that $f(x)= |x|^\alpha$ for all $|x|\geq R$, for some constants $R>0$ and $\alpha\geq 2$. Then ${\rm Ric}_f\geq 0$.

Observe that the function
\[
u(x)=\int_0^x e^{f(t)}\, dt
\]
is $f$-harmonic, since
\[
\Delta_f u = u_{xx}-\n f \cdot \n u =0.
\]

We will show that $u$ is in $L^1(d\mu)$ when $\alpha>2$.

$u$ is an odd function about $x=0$, therefore
\[
\|u\|_{L^1(d\mu)} = 4\pi \int_0^\infty u(x) \, e^{-f(x)}\, dx = 4\pi \int_0^\infty \int_0^x   e^{f(t)-f(x)}\, dt\, dx
\]
The above integral converges when $\alpha>2$. To see this, we first change the order of integration, and then break it into two parts
\begin{align*}
\int_0^\infty \int_0^x   e^{f(t)-f(x)}\, dt\, dx  &= \int_0^\infty \int_t^\infty   e^{f(t)-f(x)}\, dx\, dt \\
&= \int_0^{R} \int_t^\infty   e^{f(t)-f(x)}\, dx\, dt + \int_R^\infty \int_t^\infty   e^{f(t)-f(x)}\, dx\, dt.
\end{align*}
The first integral is clearly bounded, since $\int_t^\infty   e^{-f(x)}\, dx \leq \int_0^\infty   e^{-f(x)}\, dx <C$ for $\alpha\geq 2$, and $\int_0^R   e^{f(t)}\, dt <C$.

For the second integral, we observe that $x\geq t,$ therefore
\begin{align*}
\int_R^\infty \int_t^\infty   e^{f(t)-f(x)}\, dx\, dt & = \int_R^\infty \int_t^\infty   e^{t^\alpha-x^\alpha}\, dx\, dt  \\
&\leq  \int_R^\infty \int_t^\infty  \frac{x^{\alpha-1}}{t^{\alpha-1}} e^{t^\alpha-x^\alpha}\, dx\, dt \\
&= \int_R^\infty  \frac{-1}{(\alpha-1)\,t^{\alpha-1}} e^{t^\alpha-x^\alpha}\, \Bigr|_{x=t}^{x\to \infty}\, dt\\
&= \int_R^\infty  \frac{1}{(\alpha-1)\,t^{\alpha-1}} \, dt
\end{align*}
since $x\geq t$. The integral is again bounded for $\alpha>2$.  Note that when $\alpha=2$ the   integral diverges.

}
\end{example}

\begin{bibdiv}
\begin{biblist}

\bib{BE}{article}{
   author={Bakry, D.},
   author={{\'E}mery, Michel},
   title={Diffusions hypercontractives},
   language={French},
   conference={
      title={S\'eminaire de probabilit\'es, XIX, 1983/84},
   },
   book={
      series={Lecture Notes in Math.},
      volume={1123},
      publisher={Springer},
      place={Berlin},
   },
   date={1985},
   pages={177--206},
   review={\MR{889476 (88j:60131)}},
   doi={10.1007/BFb0075847},
}

\bib{CLY}{article}{
   author={Cheng, Siu Yuen},
   author={Li, Peter},
   author={Yau, Shing Tung},
   title={On the upper estimate of the heat kernel of a complete Riemannian
   manifold},
   journal={Amer. J. Math.},
   volume={103},
   date={1981},
   number={5},
   pages={1021--1063},
   issn={0002-9327},
   review={\MR{630777 (83c:58083)}},
   doi={10.2307/2374257},
}

\bib{Gr}{article}{
   author={Grigor{\cprime}yan, Alexander},
   title={Heat kernels on weighted manifolds and applications},
   conference={
      title={The ubiquitous heat kernel},
   },
   book={
      series={Contemp. Math.},
      volume={398},
      publisher={Amer. Math. Soc.},
      place={Providence, RI},
   },
   date={2006},
   pages={93--191},
   review={\MR{2218016 (2007a:58028)}},
   doi={10.1090/conm/398/07486},
}
\bib{Li1}{article}{
   author={Li, Peter},
   title={Uniqueness of $L^1$ solutions for the Laplace equation and the
   heat equation on Riemannian manifolds},
   journal={J. Differential Geom.},
   volume={20},
   date={1984},
   number={2},
   pages={447--457},
   issn={0022-040X},
   review={\MR{788288 (86h:58133)}},
}

\bib{Li2}{unpublished}{
author={Li, Peter},
title={Lectures on harmonic functions},
note={Notes},}

\bib{LiSch}{article}{
   author={Li, Peter},
   author={Schoen, Richard},
   title={$L^p$ and mean value properties of subharmonic functions on
   Riemannian manifolds},
   journal={Acta Math.},
   volume={153},
   date={1984},
   number={3-4},
   pages={279--301},
   issn={0001-5962},
   review={\MR{766266 (86j:58147)}},
   doi={10.1007/BF02392380},
}
\bib{XDLi1}{article}{
   author={Li, Xiang-Dong},
   title={Liouville theorems for symmetric diffusion operators on complete
   Riemannian manifolds},
   language={English, with English and French summaries},
   journal={J. Math. Pures Appl. (9)},
   volume={84},
   date={2005},
   number={10},
   pages={1295--1361},
   issn={0021-7824},
   review={\MR{2170766 (2006f:58046)}},
   doi={10.1016/j.matpur.2005.04.002},
}

\bib{MuW}{article}{
   author={Munteanu, Ovidiu},
   author={Wang, Jiaping},
   title={Smooth metric measure spaces with non-negative curvature},
   journal={Comm. Anal. Geom.},
   volume={19},
   date={2011},
   number={3},
   pages={451--486},
   issn={1019-8385},
   review={\MR{2843238}},
}
\bib{MuW2}{article}{
   author={Munteanu, Ovidiu},
   author={Wang, Jiaping},
   title={Analysis of weighted Laplacian and applications to Ricci solitons},
   journal={Comm. Anal. Geom.},
   volume={20},
   date={2012},
   number={1},
   pages={55--94},
   issn={1019-8385},
   review={\MR{2903101}},
}
\bib{SCbk}{book}{
   author={Saloff-Coste, Laurent},
   title={Aspects of Sobolev-type inequalities},
   series={London Mathematical Society Lecture Note Series},
   volume={289},
   publisher={Cambridge University Press},
   place={Cambridge},
   date={2002},
   pages={x+190},
   isbn={0-521-00607-4},
   review={\MR{1872526 (2003c:46048)}},
}
\bib{SC2}{article}{
   author={Saloff-Coste, L.},
   title={A note on Poincar\'e, Sobolev, and Harnack inequalities},
   journal={Internat. Math. Res. Notices},
   date={1992},
   number={2},
   pages={27--38},
   issn={1073-7928},
   review={\MR{1150597 (93d:58158)}},
   doi={10.1155/S1073792892000047},
}

\bib{Wu1}{unpublished}{
author={Wu, Jia-Yong},
title={$L^p$-Liouville theorems on complete smooth metric measure spaces with nonnegative curvature},
note={arXiv:1305.0616},}

\bib{WuWu}{unpublished}{
author={Wu, Jia-Yong},
author={Wu, Peng},
title={Heat kernels on smooth metric measure spaces with nonnegative curvature},
note={arXiv:1401.6155},}

\bib{WW}{article}{
   author={Wei, Guofang},
   author={Wylie, Will},
   title={Comparison geometry for the Bakry-Emery Ricci tensor},
   journal={J. Differential Geom.},
   volume={83},
   date={2009},
   number={2},
   pages={377--405},
   issn={0022-040X},
   review={\MR{2577473 (2011a:53064)}},
}

\end{biblist}
\end{bibdiv}
\end{document}